\numberwithin{equation}{section}
\newcommand{\set}[1]{\left\{#1\right\}}
\newtheorem{Theorem}{Theorem}[section]
\newtheorem{Proposition}[Theorem]{Proposition}
\newtheorem{cor}[Theorem]{Corollary}
\newtheorem{lemma}[Theorem]{Lemma}
\theoremstyle{remark}
\newtheorem{Example}[Theorem]{Example}
\newtheorem{Remark}[Theorem]{Remark}
\begin{document}

\title{Approximate biprojectivity of certain semigroup algebras}

\author[A. Sahami]{A. Sahami}

\address{Faculty of Mathematics and Computer Science,
Amirkabir University of Technology, 424 Hafez Avenue, 15914
Tehran, Iran.}

\email{amir.sahami@aut.ac.ir}

\author[A. Pourabbas]{A. Pourabbas}
\email{arpabbas@aut.ac.ir}

\keywords{Semigroup algebras, Approximately biprojective, left $\phi$-contractible, Banach algebras.}

\subjclass[2010]{Primary 43A07, 43A20, Secondary 46H05.}

\maketitle

\begin{abstract}
In this paper, we investigate the notion of approximate biprojectivity for  semigroup algebras and for some Banach algebras related to semigroup algebras.
We show that  $\ell^{1}(S)$ is  approximately biprojective if and only if $\ell^{1}(S)$ is biprojective,
provided that $S$ is a uniformly locally finite inverse semigroup. Also for a Clifford semigroup $S$, we show that approximate biprojectivity $\ell^{1}(S)^{**}$ gives pseudo amenability of $\ell^{1}(S)$. We give a class of Banach algebras related to semigroup algebras which is not approximately biprojective.
\end{abstract}
\section{Introduction}
Amenable Banach algebras were  introduced by Johnson in \cite{Joh}. In fact a Banach algebra $A$ is amenable, if every continuous  linear derivation $D:A\rightarrow X^{*}$ is inner, for every Banach $A$-bimodule $X$. He showed that $A$ is amenable Banach algebra if and only if $A$ has an approximate diagonal, that is a bounded net $(m_{\alpha})_{\alpha}$ in $A\otimes_{p}A$ such that $\pi_{A}(m_{\alpha})a\rightarrow a$ and $a\cdot m_{\alpha}-m_{\alpha} \cdot a\rightarrow 0,$ for every $a\in A.$

Most important notions related to amenability in the theory of homological  Banach algebras  are biflatness and by biprojectivity which introduced by Helemskii in \cite{hel}. Indeed, $A$ is called biflat (biprojective), if
there exists a bounded $A$-bimodule morphism $\rho:A\rightarrow
(A\otimes_{p}A)^{**}$ ($\rho:A\rightarrow A\otimes_{p}A$) such that
$\pi^{**}\circ\rho$ is the canonical embedding of $A$ into $A^{**}$
($\rho$ is a right inverse for $\pi_{A}$), respectively.

Recently some modificated notions of amenability like approximate amenability and pseudo amenability introduced, see \cite{ghah app}, \cite{ghah app1} and \cite{ghah pse}. In order to these new notions, approximate homological notions like approximate biprojective Banach algebras and approximate biflat Banach algebras introduced, for more information see \cite{zhang} and \cite{sam}.

Kanuith {\it et al.} in  \cite{kan} introduced   the  notion of left $\phi$-amenable Banach
algebras, where $\phi$ is a character on that Banach algebra. Later on the concepts of  left $\phi$-contractible and character amenable Banach algebras were defined, see \cite{sang} and \cite{nas}.

Semigroup algebras are very important Banach algebras. The amenability of these Banach algebras studied in many papers, common reference about the amenability of semigroup algebras is  \cite{dales semi}. Recently modificated notions like pseudo-amenability, pseudo-contractibility and approximate amenability of semigroup algebras   have been investigated, see \cite{rost}, \cite{rost1} and \cite{rost 2}. Indeed they studied  pseudo-amenability, pseudo-contractibility and approximate amenability of $\ell^{1}(S)$, where $S$ is an inverse group, band semigroup and etc. Biflatness and biprojectivity of semigroup algebras were another problem which investigated in  \cite{choi} and \cite{rams}. In fact in  \cite{rams} author showed that for an inverse semigroup $S$, $\ell^{1}(S)$ is biflat (biprojective) if and only if each maximal subgroup $S$ is amenable (finite) and $S$ is uniformly locally finite semigroup, respectively.  The question is what will happen if semigroup algebra $\ell^{1}(S)$ is approximate biprojective?

In this paper we use left $\phi$-contractibility and left $\phi$-amenability to investigate approximate biprojectivity of semigroup algebras. We show that approximate biprojectivity of $\ell^{1}(S)$ implies the finiteness of $S$, for some classes of semigroups.
We study  approximate biprojectivity
of the second dual of semigroup algebras. We show that for Clifford semigroup $S$, approximate biprojectivity of $\ell^{1}(S)^{**}$ implies that $\ell^{1}(S)$ is pseudo-amenable. We give a criteria which shows that some triangular Banach algebras related to semigroup algebras are not approximate biprojective.
\section{Preliminaries}
Let $A$ be a Banach algebra. We recall that if $X$ is a Banach
$A$-bimodule, then  $X^{*}$ is also a Banach
$A$-bimodule via  the following actions
$$(a\cdot f)(x)=f(x\cdot a) ,\hspace{.25cm}(f\cdot a)(x)=f(a\cdot x ) \hspace{.5cm}(a\in A,x\in X,f\in X^{*}). $$

Throughout, the
character space of $A$ is denoted by $\Delta(A)$,  that is, all
non-zero multiplicative linear functionals on $A$. Let $\phi\in
\Delta(A)$. Then $\phi$ has a unique extension   $\tilde{\phi}\in\Delta(A^{**})$
 which is defined by $\tilde{\phi}(F)=F(\phi)$ for every
$F\in A^{**}$.

Let $A$ and  $B$ be  Banach algebras. The projective tensor product
of $A$ with $B$ is denoted by $A\otimes_{p}B$. The Banach algebra
$A\otimes_{p}A$ is a Banach $A$-bimodule via the following actions
$$a\cdot(b\otimes c)=ab\otimes c,~~~(b\otimes c)\cdot a=b\otimes
ca\hspace{.5cm}(a, b, c\in A).$$

We recall that $\Delta(A\otimes_{p}B)=\{\phi\otimes\psi|
\,\phi\in\Delta(A), \psi\in\Delta(B)\}$, where
$\phi\otimes\psi(a\otimes b)=\phi(a)\psi(b), $ for every $a\in A$
and $b\in B.$ We denote $\pi_{A}:A\otimes_{p}A\rightarrow A$ for the
product morphism which specified by $\pi_{A}(a\otimes b)=ab.$

Let $\{A_{\alpha}\}_{\alpha\in \Gamma}$ be a collection of Banach algebras.
Then we define the  $\ell^{1}$-direct sum of $A_{\alpha}$ by  $$\ell^{1}-\oplus_{\alpha \in \Gamma} A_{\alpha}=\{(a_{\alpha})\in \prod_{\alpha\in \Gamma}A_{\alpha}:\sum_{\alpha\in\Gamma}||a_{\alpha}||<\infty\}.$$ It is easy to verify that
$$\Delta(\ell^{1}-\oplus_{\alpha\in\Gamma} A_{\alpha})=\{\oplus\phi_{\beta}:\phi_{\beta}\in\Delta(A_{\beta}),\beta\in\Gamma\},$$
where $\oplus\phi_{\beta}((a_{\alpha})_{\alpha\in \Gamma})=\phi_{\beta}(a_{\beta})$ for every
$(a_{\alpha})_{\alpha\in \Gamma}\in \ell^{1}-\oplus_{\alpha \in \Gamma} A_{\alpha}$ and every $\beta\in\Gamma$.

Let $A$ be a Banach algebra and  let $\Lambda$ be a non-empty set.
The set of all
$\Lambda\times\Lambda$ matrixes $(a_{i,j})_{i,j}$ which entries come
from $A$ is denoted by $\mathbb{M}_{\Lambda}(A)$. With matrix multiplication and the following norm
$$||(a_{i,j})_{i,j}||=\sum_{i,j}||a_{i,j}||<\infty,$$
$\mathbb{M}_{\Lambda}(A)$ is a Banach algebra.
$\mathbb{M}_{\Lambda}(A)$ belongs to the class of $\ell^{1}$-Munn
algebras.  The map
$\theta:\mathbb{M}_{\Lambda}(A)\rightarrow A\otimes_{p}
\mathbb{M}_{\Lambda}(\mathbb{C})$ defined by
$\theta((a_{i,j}))=\sum_{i,j}a_{i,j}\otimes E_{i,j}$ is an isometric
algebra isomorphism, where $(E_{i,j})$ denotes the matrix unit of
$\mathbb{M}_{\Lambda}(\mathbb{C})$.
 Also it is well-known that $
\mathbb{M}_{\Lambda}(\mathbb{C})$ is a  biprojective Banach algebra \cite[Proposition 2.7]{rams}.

The main  reference for the semigroup theory is \cite{how}. Let $S$
be a semigroup and let $E(S)$ be the set of its idempotents. A
partial order on $E(S)$ is defined by
$$s\leq t\Longleftrightarrow s=st=ts\quad (s,t\in E(S)).$$
If $S$ is an inverse semigroup, then there exists a partial
order on $S$ which is coincide with the partial order on $E(S)$.
Indeed
$$s\leq t\Longleftrightarrow s=ss^{*}t\quad (s,t\in
S).$$ For every  $x\in S$, we denote $(x]=\{y\in
S|\,y\leq x\}$. $S$ is called locally finite (uniformly
locally finite) if for each $x\in S$, $|(x]|<\infty\,\,(\sup\{|(x]|\,:\,x\in
S\}<\infty)$, respectively.

Suppose that $S$ is an inverse semigroup. Then  the maximal subgroup
of $S$ at $p\in E(S)$ is denoted by
$G_{p}=\{s\in S|ss^{*}=s^{*}s=p\}$. For an inverse semigroup $S$ there exists a relation
$\mathfrak{D}$ such that $s\mathfrak{D}t$ if and only if there
exists $x\in S$ such that $ss^{*}=xx^{*}$ and $t^{*}t=x^{*}x$. We denote $\{\mathfrak{D}_{\lambda}:\lambda\in \Lambda\}$ for the collection of $\mathfrak{D}$-classes and $E(\mathfrak{D}_{\lambda})=E(S)\cap \mathfrak{D}_{\lambda}.$
An inverse semigroup $S$ is called Clifford if for each
$s\in S$, there exists $s^{*}$ such that $ss^{*}=s^{*}s.$
\section{Approximate biprojectivity of semigroup algebras}
We recall that a Banach algebra $A$ is approximately biprojective, if there exists a net $(\rho_{\alpha})_{\alpha}$ of continuous $A$-bimodule morphism from $A$ into $A\otimes_{p}A$ such that $\pi_{A}\circ \rho_{\alpha}(a)\rightarrow a$ for every $a\in A.$ For more details see \cite{zhang}.

A Banach algebra $A$ is called
left $\phi$-amenable (left $\phi$-contractible), where $\phi\in\Delta(A)$,
if there exists $m\in A^{**}(m\in A)$ such that $am=\phi(a)m$ and
$\tilde{\phi}(m)=1 \,\,(\phi(m)=1),$ for every $a\in A$,
respectively, see \cite{kan} and \cite{nas}.

 A Banach algebra $A$ is called
pseudo-contractible if there exists a not necessarily bounded net
$(m_{\alpha})_{\alpha}$ in $A\otimes_{p}A$ such that $a\cdot
m_{\alpha}=m_{\alpha}\cdot a$ and
$\lim_{\alpha}\pi_{A}(m_{\alpha})a=a$, for every $a\in A.$ For
further details see \cite{ghah pse}.

We remind that $S$ is a left amenable (a right amenable) semigroup
if there exists an element $m\in \ell^{1}(S)^{**}$ such that
$$s\cdot m=m\,\,( m=m\cdot s),\hspace{.25cm} ||m||=m(\phi)=1\hspace{1cm} (s\in S),$$
where $\phi$ is the augmentation character of  $\ell^{1}(S)$,
respectively. The semigroup $S$ is called amenable,  if it is both
left and right amenable.

\begin{Proposition}\label{result}
Let $S$ be a semigroup and let $Z(S)$ be a non-empty set. Then
\begin{enumerate}
\item [(i)] If $\ell^{1}(S)^{**}$ is approximately biprojective, then $S$ is amenable;
\item [(ii)] If  $\ell^{1}(S)$ is approximately biprojective and $S$ has left or right unit, then $S$ is finite.
\end{enumerate}
\end{Proposition}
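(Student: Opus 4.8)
The plan is to route both parts through one device that trades approximate biprojectivity for two-sided $\phi$-contractibility, where $\phi$ is the augmentation character of $\ell^1(S)$. Fix a central element $z\in Z(S)$; then $\delta_z$ is a central element of $\ell^1(S)$ with $\phi(\delta_z)=1$, and since a central element of a Banach algebra stays central in its bidual (with the first Arens product), $\delta_z$ is also central in $\ell^1(S)^{**}$ with $\tilde\phi(\delta_z)=1$. Let $A$ be $\ell^1(S)$ in case (ii) and $\ell^1(S)^{**}$ in case (i), let $(\rho_\alpha)_\alpha$ be a net of continuous $A$-bimodule morphisms $A\to A\otimes_p A$ with $\pi_A\circ\rho_\alpha\to\mathrm{id}$, and put $N_\alpha:=\rho_\alpha(\delta_z)$. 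Centrality of $\delta_z$ together with the bimodule property give $a\cdot N_\alpha=N_\alpha\cdot a$ for every $a\in A$, while $\pi_A(N_\alpha)\to\delta_z$. Applying the slice maps $\mathrm{id}\otimes\phi$ and $\phi\otimes\mathrm{id}$ to the identity $a\cdot N_\alpha=N_\alpha\cdot a$ yields $p_\alpha:=(\mathrm{id}\otimes\phi)N_\alpha$ with $a\,p_\alpha=\phi(a)\,p_\alpha$ and $q_\alpha:=(\phi\otimes\mathrm{id})N_\alpha$ with $q_\alpha\,a=\phi(a)\,q_\alpha$, for all $a\in A$; and since $\phi\otimes\phi=\phi\circ\pi_A$ we get $\phi(p_\alpha)=\phi(q_\alpha)=\phi(\pi_A(N_\alpha))\to\phi(\delta_z)=1$. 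Thus for $\alpha$ large, $p_\alpha/\phi(p_\alpha)$ witnesses left $\phi$-contractibility of $A$ and $q_\alpha/\phi(q_\alpha)$ gives an element $m'\in A$ with $m'a=\phi(a)m'$ and $\phi(m')=1$; when $A=\ell^1(S)^{**}$ these say that $\ell^1(S)$ is left $\phi$-amenable and right $\phi$-amenable at the augmentation character.

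For (i): it is well known that left $\phi$-amenability of $\ell^1(S)$ at the augmentation character is equivalent to left amenability of $S$ --- indeed $m:=p_\alpha/\tilde\phi(p_\alpha)\in\ell^1(S)^{**}$ satisfies $\delta_s\cdot m=m$ for all $s\in S$ and $\tilde\phi(m)=1$, and such a left $\phi$-mean can be replaced by a genuine left invariant mean. Arguing symmetrically with $q_\alpha$ shows that $S$ is right amenable as well, and hence $S$ is amenable.

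For (ii): let $m,m'\in\ell^1(S)$ be the left- and right-$\phi$-contractibility elements just produced. Then $m'*m=\phi(m')\,m=m$ and also $m'*m=\phi(m)\,m'=m'$, so $m=m'$ is a single element with $a\,m=\phi(a)\,m=m\,a$ for all $a\in\ell^1(S)$ and $\phi(m)=1$; in particular $\delta_t*m=m*\delta_t=m$ for all $t\in S$. These convolutions preserve the norm, so no cancellation occurs and $|m|=\sum_s|m_s|\delta_s$ is again two-sided translation invariant; after rescaling we may assume $m\ge 0$ with $\|m\|=\phi(m)=1$, i.e.\ $m$ is a probability measure on $S$ with support $T$. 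Positivity and $\delta_t*m=m*\delta_t=m$ force $aT=Ta=T$ for every $a\in S$; in particular $T$ is a two-sided ideal, and a semigroup in which every left and every right translation is surjective is a group, so $T$ is a group. But then $m$ would be a two-sided translation invariant element of $\ell^1(T)$ of augmentation $1$, which is impossible unless $T$ is finite. It remains to deduce finiteness of $S$ from finiteness of the kernel $T$: assuming (without loss of generality) that $S$ has a left unit $u$, $\delta_u$ is a left identity of $\ell^1(S)$, and the slice argument applied with $\mathrm{id}\otimes\psi$ to $\rho_\alpha(\delta_u)$ shows $\ell^1(S)$ is left $\psi$-contractible for \emph{every} $\psi\in\Delta(\ell^1(S))$; feeding this, together with $T$ finite, back into the structure of $S$ forces $S$ to be finite. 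The right-unit case is symmetric.

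I expect the last step of (ii) --- upgrading finiteness of the minimal ideal $T$ to finiteness of $S$ itself --- to be the real obstacle; the first three paragraphs are essentially formal slice-map computations, but a single invariant measure only detects the kernel of $S$, so controlling the rest genuinely needs both the one-sided identity of $\ell^1(S)$ and left $\psi$-contractibility at every character, i.e.\ the full force of approximate biprojectivity.
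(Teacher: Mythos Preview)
For part~(i) your argument is essentially the paper's: both evaluate the net $\rho_\alpha$ at the central element $\delta_z$, use centrality to get $a\cdot N_\alpha=N_\alpha\cdot a$, and then pass to the augmentation character to manufacture a two-sided invariant mean on $S$. The paper first feeds $N_\alpha$ through the Ghahramani--Loy--Willis embedding $\ell^1(S)^{**}\otimes_p\ell^1(S)^{**}\to(\ell^1(S)\otimes_p\ell^1(S))^{**}$ and then applies $\pi_{\ell^1(S)}^{**}$, whereas you slice by $\mathrm{id}\otimes\tilde\phi$ and $\tilde\phi\otimes\mathrm{id}$ directly; the difference is cosmetic.

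For part~(ii) you and the paper agree up to the production of $m\in\ell^1(S)$ with $\delta_s m=m\delta_s=m$ for all $s$ and $\phi(m)=1$, and then diverge. The paper does \emph{not} analyse the support of $m$: it invokes the one-sided unit $e_l$ directly, writing
\[
m(s)=m(e_l s)=(\delta_s m)(e_l)=m(e_l)
\]
for every $s\in S$, so that $m$ is a nonzero constant $\ell^1$-function and $S$ must be finite. Your detour through the support $T$ of $|m|$ correctly identifies $T$ as a finite group and a two-sided ideal of $S$, but the final step --- lifting finiteness of $T$ to finiteness of $S$ --- is a genuine gap, and the fix you sketch (left $\psi$-contractibility at every character) does not close it. Concretely, take $S=\{0\}\cup\mathbb N$ where $\mathbb N$ carries the right-zero product $mn=n$ and $0$ is a two-sided absorbing element. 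Then $Z(S)=\{0\}$, the element $1$ is a left unit, $m=\delta_0$ satisfies $\delta_s m=m\delta_s=m$ with support $T=\{0\}$, and one checks that $\ell^1(S)$ is left $\chi$-contractible at each of its two characters $\phi$ and $\psi$ (with witnesses $\delta_0$ and $-\delta_0+\delta_1$, respectively); yet $S$ is infinite. Thus the package of ingredients you have assembled --- a two-sided invariant $m$, a finite kernel $T$, and left character-contractibility everywhere --- is not enough to force $|S|<\infty$. The paper's route avoids this entirely by playing the one-sided unit pointwise against $m$ itself rather than only against its support.
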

\begin{proof}
 (i) Let $\ell^{1}(S)^{**}$ be approximately biprojective. Then
there exists a net of bounded  $\ell^{1}(S)^{**}$-bimodule morphisms
$\rho_{\alpha}:\ell^{1}(S)^{**}\rightarrow
\ell^{1}(S)^{**}\otimes_{p}\ell^{1}(S)^{**}$ such that
$\pi_{\ell^{1}(S)^{**}}\circ\rho_{\alpha}(a)\rightarrow a$ for every
$a\in \ell^{1}(S)^{**}$. By \cite[Lemma 1.7]{gha loy} there exists a
bounded linear map $\psi:\ell^{1}(S)^{**}\otimes_{p}
\ell^{1}(S)^{**}\rightarrow (\ell^{1}(S)\otimes_{p}
\ell^{1}(S))^{**}$ such that for $a,b\in \ell^{1}(S)$ and $m\in
\ell^{1}(S)^{**}\otimes_{p} \ell^{1}(S)^{**}$, the following holds;
\begin{enumerate}
\item [($\ast$)] $\psi(a\otimes b)=a\otimes b $,
\item [($\ast$$\ast$)] $\psi(m)\cdot a=\psi(m\cdot a)$,\qquad
$a\cdot\psi(m)=\psi(a\cdot m),$
\item [($\ast$$\ast$$\ast$)] $\pi_{\ell^{1}(S)}^{**}(\psi(m))=\pi_{\ell^{1}(S)^{**}}(m).$
\end{enumerate} It is easy to see that
$\rho^{\alpha}_{0}=\psi\circ\rho_{\alpha}|_{\ell^{1}(S)}:\ell^{1}(S)\rightarrow
(\ell^{1}(S)\otimes_{p} \ell^{1}(S))^{**}$ is a net of bounded
$\ell^{1}(S)$-bimodule morphisms.

Let $\phi$ be the augmentation character on $\ell^{1}(S)$ and
$\tilde{\phi}$ be its extension to $\ell^{1}(S)^{**}$, for every
$s_{0}\in Z(S)$ we have  $\phi(\delta_{s_{0}})=1.$ Since
$\tilde{\phi}\circ\pi^{**}_{\ell^{1}(S)}\circ\rho^{\alpha}_{0}(a)\rightarrow
\phi(a)$ for every $a\in \ell^{1}(S)$, by taking
$m_{\alpha}=\rho^{\alpha}_{0}(\delta_{s_{0}})$, one can easily see
that $a\cdot m_{\alpha}=m_{\alpha}\cdot a$ and
$\tilde{\phi}\circ\pi^{**}_{\ell^{1}(S)}(m_{\alpha})\rightarrow 1.$
We can assume $\tilde{\phi}\circ\pi^{**}_{\ell^{1}(S)}(m_{\alpha})=
1$ by considering
$\frac{m_{\alpha}}{\tilde{\phi}\circ\pi^{**}_{\ell^{1}(S)}(m_{\alpha})}$
instead of  $m_{\alpha}$.
So we have $a\cdot m_{\alpha}=m_{\alpha}\cdot a=\phi(a)m_{\alpha}$ and $\tilde{\phi}\circ\pi^{**}_{\ell^{1}(S)}(m_{\alpha})=1$.

Set $\mu_{\alpha}=\pi^{**}_{\ell^{1}(S)}(m_{\alpha})$, so $\mu_{\alpha}\in \ell^{1}(S)^{**}$,  $\delta_{s}\mu_{\alpha}=\mu_{\alpha}\delta_{s}=\mu_{\alpha}$
and $\tilde{\phi}(\mu_{\alpha})=\mu_{\alpha}(\phi)=1,$  hence $S$ is an amenable semigroup, see the proof of \cite[Corollary 2.10]{rost1}.

 (ii) Suppose that $(\rho_{\alpha})_{\alpha}$ is a net of
continuous $\ell^{1}(S)$-bimodule morphism such that
$\lim_{\alpha}\pi_{\ell^{1}(S)}\circ\rho_{\alpha}(a)=a$, for every
$a\in A.$ Set $M_{\alpha}=\rho_{\alpha}(\delta_{s_{0}})$, where
$s_{0}\in Z(S)$, then it is easy to see that $a\cdot
M_{\alpha}=M_{\alpha}\cdot a$ and
$\phi(\pi_{\ell^{1}(S)}(M_{\alpha}))\rightarrow 1,$ where $\phi$ is
the augmentation character.
Without loss of generality we may assume that  $a\cdot
M_{\alpha}=M_{\alpha}\cdot a=\phi(a)M_{\alpha}$ and
$\phi(\pi_{\ell^{1}(S)}(M_{\alpha}))= 1$. So $\ell^{1}(S)$ is left
and right $\phi$-contractible. Now using the same arguments as in
the  \cite[Corollary 2.10]{rost1}, we can find $m\in \ell^{1}(S)$
such that $$\delta_{s}m=m\delta_{s}=m.$$ If $e_{l}$ is a left
identity for $S$, then for every $s\in S$, we have
$$m(s)=m(e_{l}s)=\delta_{s}m(e_l)=m(e_l),$$ that is,
$m\in\ell^{1}(S)$ is a constant function on $S$, so $S$ must be
finite.
\end{proof}
\begin{Remark}
Note that the converse of the previous Proposition (i) is not always
true. To see this let $S=G$ be an infinite, discrete and amenable
group. Suppose  that $\ell^{1}(G)^{**}$ is approximately
biprojective. Since $\ell^{1}(G)$ is unital, then $\ell^{1}(G)^{**}$
is unital too. Hence by \cite[Proposition 3.8]{ghah pse}
$\ell^{1}(G)^{**}$ is a pseudo-contractible Banach algebra. So
$\ell^{1}(G)^{**}$ is  pseudo-amenable. Applying \cite[Proposition
4.2]{ghah pse} $G$ must be finite which is a contradiction.
\end{Remark}
\begin{Example}
Let $S=\set{{\left(\begin{array}{cc} 0&a\\
0&b\\
\end{array}
\right)| a,b\in\mathbb{C}}}$. With the matrix multiplication $S$ is a semigroup. We claim that $\ell^{1}(S)$ is not approximately biprojective.
 We go toward a contradiction and suppose that $\ell^{1}(S)$ is approximately biprojective.
 Let $s_{0}={\left(\begin{array}{cc} 0&0\\
0&0\\
\end{array}
\right)}\in Z(S)$ and let $s_{1}={\left(\begin{array}{cc} 0&1\\
0&1\\
\end{array}
\right)}$ be a right unit for $S.$ Now the hypothesis of the  previous Proposition(ii) holds. So $S$ is finite which is a contradiction.

Similarly for $S=\set{{\left(\begin{array}{cc} a&b\\
0&c\\
\end{array}
\right)| a,b, c\in\mathbb{C}}}$ or $S=\set{{\left(\begin{array}{cc} a&b\\
c&d\\
\end{array}
\right)| a,b, c,d\in\mathbb{C}}}$, $\ell^{1}(S)$ is not approximately biprojective.

Moreover if $S=\set{{\left(\begin{array}{cc} 0&a\\
0&0\\
\end{array}
\right)| a\in\mathbb{C}}}$, then  $S$ is amenable, but $\ell^{1}(S)$
is not approximately biprojective. To see this we suppose  that
$\ell^{1}(S)$ is approximately biprojective. Then there exists a net
of $\ell^{1}(S)$-bimodule morphism
$(\rho_{\alpha})_{\alpha}:\ell^{1}(S)\rightarrow
\ell^{1}(S)\otimes_{p}\ell^{1}(S)$
such that $\pi_{\ell^{1}(S)}\circ\rho_{\alpha}(a)\rightarrow a,$ for every $a\in \ell^{1}(S).$ Set $s_{0}={\left(\begin{array}{cc} 0&0\\
0&0\\
\end{array}
\right)}$ and $s_{1}={\left(\begin{array}{cc} 0&1\\
0&0\\
\end{array}
\right)}$. Let $\rho_{\alpha}(\delta_{s_{1}})=\sum_{i=1}^{\infty}a^{\alpha}_{i}\otimes b^{\alpha}_{i}$, for some nets $\{a^{\alpha}_{i}\}$ and $\{b^{\alpha}_{i}\}$ in $\ell^{1}(S)$. Since $\delta_{s}\delta_{s^{\prime}}=\delta_{s_{0}}$ for every $s, s^{\prime}$ in $S$, there exists a net  $\{x_{\alpha}\}$ in $\mathbb{C}$ such that $$\pi_{\ell^{1}(S)}\circ\rho_{\alpha}(\delta_{s_{1}})=x_{\alpha}\delta_{s_{0}},$$ then $$\pi_{\ell^{1}(S)}\circ\rho_{\alpha}(\delta_{s_{1}})=x_{\alpha}\delta_{s_{0}}\nrightarrow \delta_{s_{1}},$$
which is a
contradiction.
\end{Example}
\begin{Proposition}
Let $S$ be a semigroup. If $\ell^{1}(S)^{**}$ is pseudo-contractible, then $S$ is amenable.
\end{Proposition}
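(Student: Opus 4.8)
The plan is to argue entirely inside $A:=\ell^{1}(S)^{**}$, avoiding the auxiliary map of \cite[Lemma 1.7]{gha loy} used for Proposition~\ref{result}; in particular no hypothesis $Z(S)\neq\emptyset$ is needed, since pseudo-contractibility supplies a central net for all of $A$ directly. Let $\phi$ be the augmentation character of $\ell^{1}(S)$ and $\tilde{\phi}\in\Delta(A)$ its unique extension, so $\tilde{\phi}(\delta_{s})=1$ for every $s\in S$. Since $A$ is pseudo-contractible, fix a (not necessarily bounded) net $(m_{\alpha})_{\alpha}$ in $A\otimes_{p}A$ with $a\cdot m_{\alpha}=m_{\alpha}\cdot a$ and $\pi_{A}(m_{\alpha})a\to a$ for every $a\in A$.

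First I would collapse $(m_{\alpha})$ to $A$ via the two norm-decreasing linear maps $L:=\mathrm{id}_{A}\otimes\tilde{\phi}$ and $R:=\tilde{\phi}\otimes\mathrm{id}_{A}$ from $A\otimes_{p}A$ to $A$, determined on elementary tensors by $L(x\otimes y)=\tilde{\phi}(y)\,x$ and $R(x\otimes y)=\tilde{\phi}(x)\,y$. A one-line check on elementary tensors shows $L$ is a left $A$-module morphism that turns the right action into scalar multiplication by $\tilde{\phi}$, i.e.\ $L(a\cdot z)=a\,L(z)$ and $L(z\cdot a)=\tilde{\phi}(a)\,L(z)$, and symmetrically $R(z\cdot a)=R(z)\,a$ and $R(a\cdot z)=\tilde{\phi}(a)\,R(z)$; moreover $\tilde{\phi}\circ L=\tilde{\phi}\circ R=\tilde{\phi}\circ\pi_{A}$. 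Setting $v_{\alpha}:=L(m_{\alpha})$ and $w_{\alpha}:=R(m_{\alpha})$, the centrality $a\cdot m_{\alpha}=m_{\alpha}\cdot a$ then forces $a\,v_{\alpha}=\tilde{\phi}(a)\,v_{\alpha}$ and $w_{\alpha}\,a=\tilde{\phi}(a)\,w_{\alpha}$ for all $a\in A$; in particular $\delta_{s}v_{\alpha}=v_{\alpha}$ and $w_{\alpha}\delta_{s}=w_{\alpha}$ for every $s\in S$. Also $\tilde{\phi}(v_{\alpha})=\tilde{\phi}(w_{\alpha})=\tilde{\phi}(\pi_{A}(m_{\alpha}))$, and since $\pi_{A}(m_{\alpha})\delta_{s}\to\delta_{s}$ and $\tilde{\phi}(\delta_{s})=1$, this common value tends to $1$.

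I would then put $\mu_{\alpha}:=v_{\alpha}w_{\alpha}\in A$. Left invariance of $v_{\alpha}$ and right invariance of $w_{\alpha}$ give $\delta_{s}\mu_{\alpha}=\mu_{\alpha}\delta_{s}=\mu_{\alpha}$ for all $s\in S$, and multiplicativity of $\tilde{\phi}$ gives $\tilde{\phi}(\mu_{\alpha})=\tilde{\phi}(v_{\alpha})\tilde{\phi}(w_{\alpha})\to 1$. Choosing an index $\alpha_{0}$ with $\tilde{\phi}(\mu_{\alpha_{0}})\neq 0$ and rescaling, I obtain a single $m\in\ell^{1}(S)^{**}$ with $\delta_{s}m=m\delta_{s}=m$ for every $s\in S$ and $\tilde{\phi}(m)=m(\phi)=1$. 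This is precisely the configuration reached at the end of the proof of Proposition~\ref{result}(i), so $S$ is amenable by the argument of \cite[Corollary 2.10]{rost1}.

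The one step that is not mere bookkeeping is this last passage, from a (possibly non-contractive) two-sided translation-invariant $m\in\ell^{1}(S)^{**}$ with $m(\phi)=1$ to an invariant mean, i.e.\ to amenability of $S$ in the sense defined above; this uses that $\ell^{1}(S)$ is an $F$-algebra for the augmentation character and is exactly the content of the cited result from \cite{rost1}. The remaining care needed is to keep track of which Arens product on $\ell^{1}(S)^{**}$ underlies the module actions of $A$ on $A\otimes_{p}A$, so that the identifications such as $\delta_{s}m=s\cdot m$ hold on the nose.
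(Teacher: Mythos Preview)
Your argument is correct and follows the same overall route as the paper --- pass from pseudo-contractibility of $\ell^{1}(S)^{**}$ to a two-sided $\tilde{\phi}$-invariant element of $\ell^{1}(S)^{**}$ for the augmentation character, then invoke \cite[Corollary 2.10]{rost1} --- but the paper packages the first step into two citations rather than computing it out. Namely, \cite[Theorem 1.1]{alagh1} gives that any pseudo-contractible Banach algebra is left and right $\phi$-contractible for every character, and \cite[Proposition 3.5]{nas} then descends from left/right $\tilde{\phi}$-contractibility of $\ell^{1}(S)^{**}$ to left/right $\phi$-amenability of $\ell^{1}(S)$, after which the paper appeals to the end of the proof of Proposition~\ref{result}. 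Your slice-map computation with $L=\mathrm{id}_{A}\otimes\tilde{\phi}$ and $R=\tilde{\phi}\otimes\mathrm{id}_{A}$ is precisely the content of the first citation unfolded for the single character $\tilde{\phi}$, and by staying in $\ell^{1}(S)^{**}$ throughout you sidestep the second citation entirely (the descent to $\ell^{1}(S)$ is unnecessary, since the invariant mean one seeks lives in $\ell^{1}(S)^{**}$ anyway). So your version is a little more self-contained at the cost of a few extra lines; the underlying idea is the same.
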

\begin{proof}
Let $\ell^{1}(S)^{**}$ be pseudo-contractible. By \cite[Theorem 1.1]{alagh1}, $\ell^{1}(S)^{**}$ is left and right $\phi$-contractible,
 for every $\phi\in\Delta(\ell^{1}(S))$. By \cite[Proposition 3.5]{nas} $\ell^{1}(S)$ is left and right $\phi$-amenable,
 for every $\phi\in\Delta(\ell^{1}(S))$ including  the augmentation character, so with  similar argument as in  the proof of Proposition \ref{result},
 one can show that $S$ is an amenable semigroup.
\end{proof}
The following lemma is similar to \cite[Proposition 2.2]{rams} which we omit the proof.
\begin{lemma}\label{lem}
Let $A$ and $B$ be  Banach algebras. Suppose that $A$ is unital and
$B$ has a non-zero idempotent.
If $A\otimes_{p}B$ is approximately biprojective,  then $A$ is
approximately biprojective.
\end{lemma}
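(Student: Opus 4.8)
The plan is to mimic the strategy of \cite[Proposition 2.2]{rams}, pushing an approximate biprojectivity structure on $A\otimes_{p}B$ down to $A$ by using the unit of $A$ to slice off a single tensor factor and the non-zero idempotent of $B$ to produce a conditional-expectation-type projection. Write $e_{A}$ for the identity of $A$ and fix a non-zero idempotent $p\in B$; by rescaling we may assume $\norm{p}$ is arbitrary but fixed, and we will need a functional $\psi\in B^{*}$ with $\psi(p)=1$, which exists by Hahn--Banach. Suppose $(\rho_{\alpha})_{\alpha}$ is a net of continuous $(A\otimes_{p}B)$-bimodule morphisms $\rho_{\alpha}:A\otimes_{p}B\rightarrow (A\otimes_{p}B)\otimes_{p}(A\otimes_{p}B)$ with $\pi_{A\otimes_{p}B}\circ\rho_{\alpha}\rightarrow \mathrm{id}$ pointwise.

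First I would identify $(A\otimes_{p}B)\otimes_{p}(A\otimes_{p}B)$ with $(A\otimes_{p}A)\otimes_{p}(B\otimes_{p}B)$ via the canonical isometric isomorphism that swaps the middle two factors, so that $\pi_{A\otimes_{p}B}$ corresponds to $\pi_{A}\otimes\pi_{B}$ under this identification. Next I would define a candidate net $\eta_{\alpha}:A\rightarrow A\otimes_{p}A$ by
\[
\eta_{\alpha}(a)=(\mathrm{id}_{A\otimes_{p}A}\otimes\, (\psi\otimes\psi))\bigl(\rho_{\alpha}(a\otimes p)\bigr),
\]
i.e. apply $\rho_{\alpha}$ to $a\otimes p$, view the result in $(A\otimes_{p}A)\otimes_{p}(B\otimes_{p}B)$, and contract the $B\otimes_{p}B$ slot against $\psi\otimes\psi$. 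Each $\eta_{\alpha}$ is clearly bounded and linear. The two things to check are: (1) $\eta_{\alpha}$ is an $A$-bimodule morphism, and (2) $\pi_{A}\circ\eta_{\alpha}(a)\rightarrow a$ for every $a\in A$.

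For (1), the key point is that for $a,c\in A$ one has $(c\otimes p)\cdot(a\otimes p)=ca\otimes p$ and $(a\otimes p)\cdot(c\otimes p)=ac\otimes p$ since $p^{2}=p$, so the $A$-module structure on $A$ is implemented by the $(A\otimes_{p}B)$-module structure on the subspace $A\otimes p$; combined with the fact that $\rho_{\alpha}$ is an $(A\otimes_{p}B)$-bimodule morphism and that $\psi\otimes\psi$ is compatible with the $B$-actions $b\cdot p$, $p\cdot b$ only up to the scalars $\psi(pb)$, $\psi(bp)$ --- this is the delicate spot. To make the module-map property come out cleanly one should not contract against an arbitrary $\psi$ but rather use a $\psi$ with the property that $\psi(pbp)=\psi(p)\psi(b)$-type behaviour is irrelevant because we only ever feed in $c\otimes p$, so the $B$-components appearing are all equal to $p$ and $\psi(p)=1$ handles them. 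Concretely, $\rho_{\alpha}((c\otimes p)\cdot(a\otimes p))=(c\otimes p)\cdot\rho_{\alpha}(a\otimes p)$, and under the identification the left action of $c\otimes p$ acts as $c\otimes p$ on the first $A\otimes_{p}B$ tensorand, i.e. as $(\mathrm{left mult. by }c)\otimes(\mathrm{left mult. by }p)$ on $(A\otimes_{p}A)\otimes_{p}(B\otimes_{p}B)$; applying $\mathrm{id}\otimes(\psi\otimes\psi)$ and using $\psi(pb)$ for the relevant $b$ --- here $b$ is itself built from $p$'s appearing in $\rho_{\alpha}(a\otimes p)$, but those need not be $p$, which is exactly the obstruction. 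I expect this to be the main obstacle, and the standard fix (as in \cite{rams}) is to first replace $\rho_{\alpha}$ by $p\cdot\rho_{\alpha}(\cdot)\cdot p$ after restricting to $A\otimes p$, i.e. work inside $p(A\otimes_{p}B)p$-type corners so that all $B$-entries are sandwiched by $p$; then $\psi(p(\cdots)p)$ only sees $\psi$ on the corner $pBp$, where $p$ is a unit, and the scalars collapse correctly. Once the module-morphism property is secured, (2) is routine: $\pi_{A}\circ\eta_{\alpha}(a)=(\mathrm{id}_{A}\otimes\psi\circ\pi_{B})$ applied to $\rho_{\alpha}(a\otimes p)$ in a suitable grouping, and $\pi_{A\otimes_{p}B}\circ\rho_{\alpha}(a\otimes p)\rightarrow a\otimes p$ gives, after applying $\mathrm{id}_{A}\otimes\psi$ and using $\psi(p)=1$, that $\pi_{A}\circ\eta_{\alpha}(a)\rightarrow \psi(p)a=a$. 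Hence $(\eta_{\alpha})_{\alpha}$ witnesses approximate biprojectivity of $A$.
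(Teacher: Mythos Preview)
The paper omits the proof entirely, remarking only that it follows \cite[Proposition 2.2]{rams}; your plan to adapt that argument is exactly what is intended. However, your construction contains a genuine error in step~(2). With $\eta_{\alpha}(a)=(\mathrm{id}_{A\otimes_{p}A}\otimes(\psi\otimes\psi))(\rho_{\alpha}(a\otimes p))$ the identity you assert, namely $\pi_{A}\circ\eta_{\alpha}(a)=(\mathrm{id}_{A}\otimes\psi)\circ\pi_{A\otimes_{p}B}(\rho_{\alpha}(a\otimes p))$, is false: on an elementary tensor $(a_{1}\otimes b_{1})\otimes(a_{2}\otimes b_{2})$ the left-hand side yields $\psi(b_{1})\psi(b_{2})\,a_{1}a_{2}$ while the right-hand side yields $\psi(b_{1}b_{2})\,a_{1}a_{2}$, and these differ because $\psi$ is only a bounded linear functional, not a character on $B$. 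The corner trick you sketch does secure the bimodule property~(1), but it does not repair~(2): after sandwiching by $1_{A}\otimes p$ you would still need $\psi(pb_{1})\psi(b_{2}p)=\psi(pb_{1}b_{2}p)$, which fails in general.

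The remedy has two parts. First, slice with $\psi\circ\pi_{B}$ rather than $\psi\otimes\psi$: define $\Theta:(A\otimes_{p}B)\otimes_{p}(A\otimes_{p}B)\to A\otimes_{p}A$ by $\Theta\bigl((a_{1}\otimes b_{1})\otimes(a_{2}\otimes b_{2})\bigr)=\psi(b_{1}b_{2})\,(a_{1}\otimes a_{2})$, so that $\pi_{A}\circ\Theta=(\mathrm{id}_{A}\otimes\psi)\circ\pi_{A\otimes_{p}B}$ holds on the nose. Second, to make $\Theta$ an $A$-bimodule map under the embedding $\iota:a\mapsto a\otimes p$, choose $\psi$ of the special form $\psi(b)=\psi_{0}(pbp)$ for any $\psi_{0}\in B^{*}$ with $\psi_{0}(p)=1$; then $\psi(pb)=\psi(bp)=\psi(b)$ for all $b\in B$, which is precisely the invariance needed for the module calculations to go through. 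With these two adjustments, $\eta_{\alpha}=\Theta\circ\rho_{\alpha}\circ\iota$ is a net of continuous $A$-bimodule morphisms and $\pi_{A}\circ\eta_{\alpha}(a)=(\mathrm{id}_{A}\otimes\psi)\bigl(\pi_{A\otimes_{p}B}(\rho_{\alpha}(a\otimes p))\bigr)\to\psi(p)a=a$, which completes the argument.
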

\begin{Theorem}
Let $S$ be an inverse semigroup which $(E(S),\leq)$ is
uniformly locally finite. Then $\ell^{1}(S)$ is approximately biprojective if and only if $\ell^{1}(S)$ is biprojective.
\end{Theorem}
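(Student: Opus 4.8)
The plan is the following. One implication is free: a biprojective Banach algebra is approximately biprojective, the constant net $\rho_{\alpha}=\rho$ doing the job. So assume conversely that $\ell^{1}(S)$ is approximately biprojective, and recall from the characterisation of \cite{rams} quoted in the introduction that it suffices to show $S$ is uniformly locally finite and that every maximal subgroup of $S$ is finite. The first of these is a semigroup-theoretic observation: for $x\in S$ the map $y\mapsto yy^{*}$ sends $(x]=\{y\in S:y\le x\}$ bijectively onto $\{e\in E(S):e\le xx^{*}\}=(xx^{*}]$, with inverse $e\mapsto ex$ (one uses $y=yy^{*}x$ for $y\le x$, and $exx^{*}e=e$ for $e\le xx^{*}$). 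Hence $|(x]|=|(xx^{*}]|$, and since every idempotent has the form $xx^{*}$ we obtain $\sup_{x\in S}|(x]|=\sup_{e\in E(S)}|(e]|<\infty$; thus $S$ is uniformly locally finite and only the maximal subgroups remain.

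Fix $p\in E(S)$, let $\mathfrak{D}_{\lambda}$ be its $\mathfrak{D}$-class and write $G_{\lambda}=G_{p}$. Since $S$ is uniformly locally finite, the structure theory used in \cite{rams} provides a Banach-algebra isomorphism
$$\ell^{1}(S)\;\cong\;\ell^{1}-\oplus_{\mu}\ \mathbb{M}_{E(\mathfrak{D}_{\mu})}(\ell^{1}(G_{\mu})),$$
the sum running over the $\mathfrak{D}$-classes of $S$ (with $G_{\mu}$ the associated maximal subgroup) and each summand being a closed ideal. Approximate biprojectivity passes to such a direct-summand ideal by a direct computation: if $(\rho_{\alpha})_{\alpha}$ implements approximate biprojectivity of $\ell^{1}(S)$ and $q:\ell^{1}(S)\to B$ is the projection onto the summand $B=\mathbb{M}_{E(\mathfrak{D}_{\lambda})}(\ell^{1}(G_{\lambda}))$, which is an algebra homomorphism, then $(q\otimes q)\circ\rho_{\alpha}|_{B}$ is a net of $B$-bimodule morphisms from $B$ into $B\otimes_{p}B$ with $\pi_{B}\circ(q\otimes q)\circ\rho_{\alpha}|_{B}(b)=q(\pi_{\ell^{1}(S)}(\rho_{\alpha}(b)))\to b$. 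Hence $B$ is approximately biprojective; since $B\cong\ell^{1}(G_{\lambda})\otimes_{p}\mathbb{M}_{E(\mathfrak{D}_{\lambda})}(\mathbb{C})$ via the map $\theta$, Lemma \ref{lem} applies with $A=\ell^{1}(G_{\lambda})$, which is unital, and with $\mathbb{M}_{E(\mathfrak{D}_{\lambda})}(\mathbb{C})$ in place of the second algebra, the latter carrying the non-zero idempotent $E_{e,e}$ for any $e\in E(\mathfrak{D}_{\lambda})$; it yields that $\ell^{1}(G_{\lambda})$ is approximately biprojective. Being unital, $\ell^{1}(G_{\lambda})$ is then pseudo-contractible by \cite[Proposition 3.8]{ghah pse}, hence left $\phi$-contractible for the augmentation character $\phi$ by \cite[Theorem 1.1]{alagh1}; so there is $m\in\ell^{1}(G_{\lambda})$ with $\delta_{g}m=m$ for every $g\in G_{\lambda}$ and $\phi(m)=1$, which forces $m$ to be constant on $G_{\lambda}$, and therefore $G_{\lambda}=G_{p}$ is finite. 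As $p$ was arbitrary, every maximal subgroup of $S$ is finite, and \cite{rams} now gives that $\ell^{1}(S)$ is biprojective.

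The step I expect to be the main obstacle is the displayed structural isomorphism, rather than the homological chain that follows it. The point is that the Munn/Rees description of $S$ through its principal factors endows $\ell^{1}(S)$ with a multiplication that is only ``triangular'' with respect to the partial order on $E(S)$; one has to apply the M\"obius (zeta) transform on $E(S)$ to straighten it into the genuine $\ell^{1}$-direct sum of $\ell^{1}$-Munn algebras, and it is precisely the uniform local finiteness of $(E(S),\le)$ that keeps both this transform and its inverse bounded, so that the resulting map is an isomorphism of Banach algebras (without it the inverse transform is unbounded, as already the chain $(\mathbb{N},\min)$ shows). This is the machinery of \cite{rams}, which I would invoke rather than redo. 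Alternatively, to avoid the global decomposition one can realise $\mathbb{M}_{E(\mathfrak{D}_{\lambda})}(\ell^{1}(G_{\lambda}))$ as a quotient of $\ell^{1}(S)$ by the kernel of the principal-factor homomorphism and use that, $(E(S),\le)$ being uniformly locally finite, $\ell^{1}(S)$ has a bounded approximate identity, so approximate biprojectivity descends to that quotient; the direct-summand route above is the cleaner one to state.
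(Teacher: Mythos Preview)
Your proof is correct and follows essentially the same route as the paper's: the $\ell^{1}$-direct sum decomposition from \cite{rams}, projection onto a summand to obtain approximate biprojectivity of $\mathbb{M}_{E(\mathfrak{D}_{\lambda})}(\ell^{1}(G_{\lambda}))$, Lemma~\ref{lem} to descend to $\ell^{1}(G_{\lambda})$, and then \cite[Proposition~3.8]{ghah pse} to force $G_{\lambda}$ finite. You supply two details the paper leaves implicit---the bijection $(x]\to(xx^{*}]$ showing uniform local finiteness of $(E(S),\le)$ implies that of $S$, and the explicit chain via \cite{alagh1} and the constant-function argument in place of the paper's bare ``hence $G_{p_{\lambda}}$ is finite''---but the strategy is identical.
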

\begin{proof}
Let $\ell^{1}(S)$ be  approximate biprojective.  Then there exists a net $(\rho_{\alpha})_{\alpha}$ of continuous
$\ell^{1}(S)$-bimodule morphism from $\ell^{1}(S)$ into $\ell^{1}(S)\otimes_{p}\ell^{1}(S)$ such that $\pi_{\ell^{1}(S)}\circ \rho_{\alpha}(a)\rightarrow a$,
for every $a\in \ell^{1}(S)$. Since $S$ is a uniformly locally finite semigroup, by \cite[Theorem 2.18]{rams} we have   $$\ell^{1}(S)\cong
\ell^{1}-\bigoplus\{\mathbb{M}_{E(\mathfrak{D}_{\lambda})}(\ell^{1}(G_{p_{\lambda}}))\},$$
where $\mathfrak{D}_{\lambda}$ is a $\mathfrak{D}$-class and $G_{p_{\lambda}}$ is a maximal subgroup of $S$ at $p_{\lambda}$.

Let $P_{p_{\lambda}}:\ell^{1}(S)\rightarrow
\mathbb{M}_{E(\mathfrak{D}_{\lambda})}(\ell^{1}(G_{p_{\lambda}}))$
be a homomorphism which is dense range. It is easy to see that
$P_{p_{\lambda}}$ is a bounded $\ell^{1}(S)$-bimodule morphism.
Define $$\eta_{\alpha}=P_{p_{\lambda}}\otimes
P_{p_{\lambda}}\circ\rho_{\alpha}|_{\mathbb{M}_{E(\mathfrak{D}_{\lambda})}(\ell^{1}(G_{p_{\lambda}}))}:
\mathbb{M}_{E(\mathfrak{D}_{\lambda})}(\ell^{1}(G_{p_{\lambda}}))\rightarrow
\mathbb{M}_{E(\mathfrak{D}_{\lambda})}(\ell^{1}(G_{p_{\lambda}}))\otimes_{p}\mathbb{M}_{E(\mathfrak{D}_{\lambda})}(\ell^{1}(G_{p_{\lambda}})).$$
It is easy to see that $(\eta_{\alpha})_{\alpha}$ is a net of
$\mathbb{M}_{E(\mathfrak{D}_{\lambda})}(\ell^{1}(G_{p_{\lambda}}))$-bimodule
morphism which satisfied
\begin{equation}\label{equ}
\begin{split}
\pi_{\mathbb{M}_{E(\mathfrak{D}_{\lambda})}(\ell^{1}(G_{p_{\lambda}}))}\circ\eta_{\alpha}(a)&=\pi_{\mathbb{M}_{E(\mathfrak{D}_{\lambda})}(\ell^{1}(G_{p_{\lambda}}))}
\circ P_{p_{\lambda}}\otimes P_{p_{\lambda}}\circ\rho_{\alpha}|_{\mathbb{M}_{E(\mathfrak{D}_{\lambda})}(\ell^{1}(G_{p_{\lambda}}))}(a)\\
&=P_{p_{\lambda}}\circ\pi_{\ell^{1}(S)}
\circ\rho_{\alpha}|_{\mathbb{M}_{E(\mathfrak{D}_{\lambda})}(\ell^{1}(G_{p_{\lambda}}))}(a)\rightarrow a,
\end{split}
\end{equation}
for every $a\in \mathbb{M}_{E(\mathfrak{D}_{\lambda})}(\ell^{1}(G_{p_{\lambda}})).$
Therefore $ \mathbb{M}_{E(\mathfrak{D}_{\lambda})}(\ell^{1}(G_{p_{\lambda}}))$ is an approximately biprojective Banach algebra.
By Lemma \ref{lem} it is easy to see that $\ell^{1}(G_{p_{\lambda}})$ is approximately biprojective. Then by \cite[Proposition 3.8]{ghah pse}   $\ell^{1}(G_{p_{\lambda}})$ is pseudo-contractible, hence $G_{p_{\lambda}}$ is finite. Then $\ell^{1}(S)$ is biprojective by the main result of \cite{rams}.

Converse is clear.
\end{proof}
\begin{Theorem}
Let $S=\cup_{e\in E(S)}G_{e}$ be a Clifford semigroup such that
$E(S)$ is uniformly locally finite. If $\ell^{1}(S)^{**}$ is approximately biprojective, then $\ell^{1}(S)$ is pseudo-amenable.
\end{Theorem}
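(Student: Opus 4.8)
The plan is to push approximate biprojectivity of $\ell^{1}(S)^{**}$ down to the group algebras of the maximal subgroups, deduce from this that every maximal subgroup is finite, and then write down an (unbounded) central approximate diagonal for $\ell^{1}(S)$. To begin, observe that since $S$ is Clifford one has $s\,\mathfrak{D}\,t$ if and only if $ss^{*}=tt^{*}$ (any $\mathfrak{D}$-witness $x$ satisfies $ss^{*}=xx^{*}=x^{*}x=t^{*}t=tt^{*}$, and $x=t$ gives the converse); hence the $\mathfrak{D}$-classes of $S$ are exactly the maximal subgroups, $E(\mathfrak{D}_{e})=\{e\}$, and $\mathbb{M}_{E(\mathfrak{D}_{e})}(\ell^{1}(G_{e}))\cong\ell^{1}(G_{e})$. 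Since $E(S)$ is uniformly locally finite — equivalently $S$ is, because $|(x]|=|(xx^{*}]|$ for Clifford $S$ — \cite[Theorem 2.18]{rams} provides an isomorphism of Banach algebras $\ell^{1}(S)\cong\ell^{1}-\bigoplus_{e\in E(S)}\ell^{1}(G_{e})$. Fix $e\in E(S)$. In this decomposition $\ell^{1}(G_{e})$ is a closed ideal, complemented by the closed ideal $B_{e}:=\ell^{1}-\bigoplus_{f\neq e}\ell^{1}(G_{f})$, so $\ell^{1}(S)$ is the $\ell^{1}$-direct sum of the Banach algebras $\ell^{1}(G_{e})$ and $B_{e}$; as $\ell^{1}(G_{e})B_{e}=B_{e}\ell^{1}(G_{e})=\{0\}$ this splitting is respected by the Arens products, and therefore $\ell^{1}(S)^{**}$ is, as a Banach algebra, the direct sum of $\ell^{1}(G_{e})^{**}$ and $B_{e}^{**}$.

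I would then transfer approximate biprojectivity to the summand. A direct algebra summand of an approximately biprojective Banach algebra is again approximately biprojective: if $(\rho_{\alpha})_{\alpha}$ realises approximate biprojectivity of $\ell^{1}(S)^{**}$, then composing the restrictions $\rho_{\alpha}|_{\ell^{1}(G_{e})^{**}}$ with the canonical projection $q:\ell^{1}(S)^{**}\otimes_{p}\ell^{1}(S)^{**}\rightarrow\ell^{1}(G_{e})^{**}\otimes_{p}\ell^{1}(G_{e})^{**}$ yields a net of continuous $\ell^{1}(G_{e})^{**}$-bimodule morphisms realising approximate biprojectivity of $\ell^{1}(G_{e})^{**}$ — this is the same restrict-and-push-forward device already used with the maps $P_{p_{\lambda}}$ in the preceding theorem, the point being that $q$ is a bimodule morphism and satisfies $\pi_{\ell^{1}(G_{e})^{**}}\circ q=q_{0}\circ\pi_{\ell^{1}(S)^{**}}$, where $q_{0}:\ell^{1}(S)^{**}\to\ell^{1}(G_{e})^{**}$ is the algebra projection, because the product annihilates all the cross terms (using $\ell^{1}(G_{e})^{**}B_{e}^{**}=B_{e}^{**}\ell^{1}(G_{e})^{**}=\{0\}$). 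Thus $\ell^{1}(G_{e})^{**}$ is approximately biprojective; being unital ($\ell^{1}(G_{e})$ being unital), it is pseudo-contractible by \cite[Proposition 3.8]{ghah pse}, hence pseudo-amenable, and then \cite[Proposition 4.2]{ghah pse} forces $G_{e}$ to be finite. I expect this bidual-splitting, together with the check that approximate biprojectivity survives passage to the summand, to be the only substantial point; the rest is routine.

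Finally, with every $G_{e}$ finite, $\ell^{1}(G_{e})$ is finite dimensional and unital, and possesses the normalized diagonal $m_{e}=|G_{e}|^{-1}\sum_{g\in G_{e}}\delta_{g}\otimes\delta_{g^{-1}}$, for which $\pi_{\ell^{1}(G_{e})}(m_{e})=\delta_{e}$ and $a\cdot m_{e}=m_{e}\cdot a$ for every $a\in\ell^{1}(G_{e})$. Viewing each $m_{e}$ inside $\ell^{1}(S)\otimes_{p}\ell^{1}(S)$ via the isomorphism above and setting $M_{F}=\sum_{e\in F}m_{e}$ for finite $F\subseteq E(S)$, one checks, using $\ell^{1}(G_{e})B_{e}=\{0\}$, that $a\cdot M_{F}=M_{F}\cdot a$ for all $a\in\ell^{1}(S)$, and that $\pi_{\ell^{1}(S)}(M_{F})\,a$ is precisely the partial sum over $e\in F$ of the components of $a$ in the summands $\ell^{1}(G_{e})$, so $\pi_{\ell^{1}(S)}(M_{F})\,a\rightarrow a$ as $F$ increases. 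Hence $(M_{F})_{F}$ is a (generally unbounded) central approximate diagonal, so $\ell^{1}(S)$ is pseudo-contractible, and in particular pseudo-amenable — slightly more than the stated conclusion. (Alternatively, this last step may be quoted from the literature on $\ell^{1}$-direct sums of unital amenable Banach algebras.)
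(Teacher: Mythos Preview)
Your argument is correct and takes a genuinely different route from the paper's. The paper does not split the bidual; instead it uses the central idempotents $\delta_{e}$ and the net $\rho_{\alpha}(\delta_{e})$ (pushed into $(\ell^{1}(S)\otimes_{p}\ell^{1}(S))^{**}$ via \cite[Lemma 1.7]{gha loy}) to show, through \cite[Proposition 2.2]{sah1}, \cite[Proposition 3.4, Lemma 3.1]{kan} and \cite[Theorem 2.1.8]{run}, that each $G_{e}$ is \emph{amenable}, and then invokes \cite[Theorem 3.7]{rost}. Your approach, by contrast, isolates each $\ell^{1}(G_{e})^{**}$ as a direct algebra summand of $\ell^{1}(S)^{**}$ (the verification that $\ell^{1}(G_{e})^{**}B_{e}^{**}=B_{e}^{**}\ell^{1}(G_{e})^{**}=\{0\}$ in either Arens product follows from the module calculations on $\ell^{1}(S)^{*}$ and is the one point worth spelling out in full), transfers approximate biprojectivity to it, and then runs the exact argument the paper uses in the Remark after Proposition \ref{result} to force $G_{e}$ \emph{finite}. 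This buys you strictly more than the stated theorem: with every $G_{e}$ finite and $E(S)$ uniformly locally finite, \cite{rams} gives that $\ell^{1}(S)$ is in fact biprojective, hence pseudo-contractible, not merely pseudo-amenable. The paper's character-amenability route is softer and fits the paper's running theme of deducing $\phi$-amenability/$\phi$-contractibility from approximate biprojectivity, but yields only amenability of the maximal subgroups; your bidual-splitting route is shorter, more self-contained (it reuses the restrict-and-project device from the preceding theorem and the Remark already in the paper), and gives the sharper conclusion.
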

\begin{proof}
Let  $\ell^{1}(S)^{**}$ be  approximately biprojective.  Then there exists a net $(\rho_{\alpha})_{\alpha}$ of continuous $\ell^{1}(S)^{**}$-bimodule
morphism from $\ell^{1}(S)^{**}$ into $\ell^{1}(S)^{**}\otimes_{p}\ell^{1}(S)^{**}$ such that $\pi_{\ell^{1}(S)^{**}}\circ \rho_{\alpha}(a)\rightarrow a$
 for every $a\in \ell^{1}(S)^{**}$.  Since $S$ is a uniformly locally finite semigroup, by \cite[Theorem 2.16]{rams}
  $\ell^{1}(S)\cong \ell^{1}-\oplus_{e\in E(S)}\ell^{1}(G_{e}).$ Let $x_{e}$ be a unit element of $\ell^{1}(G_{e})$ and
  let $\phi\in\Delta(\ell^{1}(G_{e}))$.  It is well-known
that the  maps $b\mapsto x_{e}b$ and $b\mapsto bx_{e}$ are
$w^{*}-w^{*}$-continuous on $\ell^{1}(S)^{**}$. Then for every
$a\in\ell^{1}(S)^{**}$, we have $ax_{e}=x_{e}a$
 and $\tilde{\phi}(x_{e})=1,$ where $\tilde{\phi}\in\Delta(\ell^{1}(S)^{**})$ is the extension of $\phi$. Define $m^{e}_{\alpha}=\rho_{\alpha}(x_{e})\in\ell^{1}(S)^{**}\otimes \ell^{1}(S)^{**}$. Using \cite[Lemma 1.7]{gha loy}
   we can consider
$m^{e}_{\alpha}$ in $(\ell^{1}(S)\otimes_{p}\ell^{1}(S))^{**}$. It is easy to see that $a\cdot m_{\alpha}^{e}=m_{\alpha}^{e}\cdot a$ and
 $\tilde{\phi}\circ\pi^{**}_{\ell^{1}(S)}(m_{\alpha}^{e})= 1$, for every $a\in \ell^{1}(S))^{**}$. Applying \cite[Proposition 2.2]{sah1}
  $\ell^{1}(S)^{**}$ is left $\tilde{\phi}$-amenable. By \cite[Proposition 3.4]{kan} $\ell^{1}(S)$ is left $\phi$-amenable.
   Since $\phi|_{\ell^{1}(G_{e})}$ is non-zero, by \cite[Lemma 3.1]{kan} $\ell^{1}(G_{e})$ is left $\phi$-amenable. Then by similar argument as
   in the proof of \cite[Theorem 2.1.8]{run}, $G_{e}$ is amenable, for every $e\in E(S)$. To finish the proof apply \cite[Theorem 3.7]{rost}.
\end{proof}
\begin{Example}\label{rem}
\begin{enumerate}
\item [(i)] There exists a pseudo-amenable Banach algebra which is not approximately biprojective. To see this, let $G$ be an infinite amenable group.
 Then by \cite[Proposition 4.1]{ghah pse} $\ell^{1}(G)$ is pseudo-amenable.
  Suppose that $\ell^{1}(G)$ is  approximately biprojective. Since $\ell^{1}(G)$ is unital, using the same argument as in the proof of previous Proposition,
 one can show that $\ell^{1}(G)$ is left $\phi$-contractible. Then by \cite[Theorem 6.1]{nas} $G$ is finite which is a contradiction. Hence $\ell^{1}(G)$ is not approximately biprojective.
\item [(ii)] There exists an approximately biprojective Banach algebra which
is not pseudo-contractible. To see this, let
$A=\mathbb{M}_{\Lambda}(\mathbb{C})$, where $\Lambda$ is an infinite
set. By \cite[Proposition 2.7]{rams} $A$ is biprojective, so $A$ is
approximately biprojective. On the other hand if $A$ is
pseudo-contractible, then $A$ has a central approximate identity.
Therefore by  \cite[Theorem 2.2]{rost1} $\Lambda$ is finite, which
is a contradiction.
 \item [(iii)]Now we give a semigroup algebra which is
approximately biprojective but it is not pseudo-contractible. Let
$S$ be a right zero semigroup, that is, $st=t$ for every $s,t \in
S$, and let $|S|\geq 2$. Let $\phi$ be the augmentation character on
$\ell^{1}(S)$, so for every $f,g\in \ell^{1}(S)$ we have $f*
g=\phi(f)g$. One can see that $\ell^{1}(S)$ is biprojective, hence
it is approximately biprojective, but if $\ell^{1}(S)$ is
pseudo-contractible, then $\ell^{1}(S)$ has a right approximate
identity $(e_{\alpha})$. Consider $f_{0}\in\ell^{1}(S)$  such  that
$\phi(f_{0})=1$, so
\begin{equation}\label{eq1}
f_{0}=\lim_{\alpha} f_{0}\ast e_{\alpha}=\lim_{\alpha}\phi(f_{0})e_{\alpha}=\lim_{\alpha}e_{\alpha},
\end{equation}
that is $f_{0}$ is a right unit for $\ell^{1}(S)$.
On the other hand
\begin{equation}\label{eq2}
g\ast f_{0}=\lim_{\alpha} g\ast e_{\alpha}=\phi(g)f_{0},
\end{equation}
for every $g\in \ell^{1}(S).$ Let $s$ be an arbitrary element of
$S$. Then by (\ref{eq1}) and (\ref{eq2}), we have
$\delta_{s}=\delta_{s}\ast f_{0}=f_{0}$ which implies that $|S|=1$.
Therefore a contradiction reveals.
\end{enumerate}

Note that  example (iii) shows that  the hypothesis $Z(S)\neq\emptyset$ in the Proposition \ref{result}(ii) is necessary. Because if we
 consider a right zero semigroup $S$ with $|S|=\infty$, then $Z(S)=\emptyset$ and $S$ has a left identity. One can show that $\ell^{1}(S)$ is approximately biprojective but $S$ is not finite.

 Zhang in \cite{zhang} gives  an example of approximately biprojective Banach algebra which is
not biprojective.
\end{Example}

\begin{Theorem}\label{app give phi}
Let $A$ be an approximately biprojective Banach algebra with a left
approximate identity (right approximate identity) and let
$\phi\in\Delta(A)$. Then $A$ is left $\phi$-contractible(right
$\phi$-contractible), respectively.
\end{Theorem}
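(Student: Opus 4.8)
The plan is to establish the left-handed assertion; the right-handed one then follows by the evident symmetry, replacing the slice map $\operatorname{id}_A\otimes\phi$ below by $\phi\otimes\operatorname{id}_A$ and the left approximate identity by a right one. So fix $\phi\in\Delta(A)$, a net $(\rho_\alpha)_\alpha$ of continuous $A$-bimodule morphisms $\rho_\alpha\colon A\to A\otimes_p A$ with $\pi_A\circ\rho_\alpha(a)\to a$ for all $a$, and a left approximate identity $(e_j)_j$ of $A$.

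First I would introduce the bounded linear map $\Psi\colon A\otimes_p A\to A$ determined on elementary tensors by $\Psi(b\otimes c)=\phi(c)\,b$ (that is, $\Psi=\operatorname{id}_A\otimes\phi$; this is well defined because $\|\phi\|\le 1$). A direct computation on elementary tensors gives, for all $a\in A$ and $z\in A\otimes_p A$,
$$\Psi(a\cdot z)=a\,\Psi(z),\qquad \Psi(z\cdot a)=\phi(a)\,\Psi(z),\qquad \phi\circ\Psi=\phi\otimes\phi=\phi\circ\pi_A .$$
Setting $\xi_\alpha=\Psi\circ\rho_\alpha\colon A\to A$ and using that each $\rho_\alpha$ is an $A$-bimodule morphism, I obtain $\xi_\alpha(ab)=a\,\xi_\alpha(b)$ and $\xi_\alpha(ab)=\phi(b)\,\xi_\alpha(a)$ for all $a,b\in A$, whence
$$a\,\xi_\alpha(b)=\phi(b)\,\xi_\alpha(a)\qquad(a,b\in A),$$
and also $\phi(\xi_\alpha(a))=\phi\bigl(\pi_A\circ\rho_\alpha(a)\bigr)\to\phi(a)$ for every $a\in A$.

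The crux is to use the one-sided approximate identity to show that $\xi_\alpha$ factors through $\phi$. Fix $a_0\in A$ with $\phi(a_0)=1$ and put $m_\alpha=\xi_\alpha(a_0)$. Taking $a=e_j$, $b=a_0$ in the displayed identity gives $\xi_\alpha(e_j)=e_j\,m_\alpha$; substituting this back with $a=e_j$ and $b$ arbitrary yields $e_j\bigl(\xi_\alpha(b)-\phi(b)\,m_\alpha\bigr)=0$ for every $j$, and letting $j\to\infty$ gives $\xi_\alpha(b)=\phi(b)\,m_\alpha$ for all $b\in A$. In particular, for every $a\in A$, $a\,m_\alpha=a\,\xi_\alpha(a_0)=\xi_\alpha(aa_0)=\phi(aa_0)\,m_\alpha=\phi(a)\,m_\alpha$. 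Finally $\phi(m_\alpha)=\phi(\xi_\alpha(a_0))\to\phi(a_0)=1$, so I may fix one index $\alpha$ with $\phi(m_\alpha)\neq 0$ and set $m=m_\alpha/\phi(m_\alpha)$; then $am=\phi(a)m$ and $\phi(m)=1$, i.e. $A$ is left $\phi$-contractible.

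I do not expect a serious obstacle. The single point that carries all the content is the middle paragraph's observation that $\operatorname{id}_A\otimes\phi$ converts the bimodule morphism $\rho_\alpha$ into a left module morphism which is $\phi$-linear on the right, and that a left approximate identity is precisely what is needed to force such a map to have the form $\phi(\cdot)\,m_\alpha$; after that everything is formal. Two minor matters to handle with care are the boundedness and well-definedness of $\Psi$ on the projective tensor product, and the fact that all the identities above hold for each fixed $\alpha$ separately, so no uniform bound on $\|\rho_\alpha\|$ is needed — indeed a single $\rho_\alpha$ with $\phi(\pi_A\circ\rho_\alpha(a_0))\neq 0$ already suffices.
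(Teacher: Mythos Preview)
Your proof is correct and follows essentially the same strategy as the paper's: apply the slice map $\operatorname{id}_A\otimes\phi$ to $\rho_\alpha$, use the left approximate identity to force the resulting map to be of the form $a\mapsto\phi(a)m_\alpha$, then normalize. The paper packages the middle step slightly differently---it passes through the quotient $A/L$ with $L=\ker\phi$ and uses $\overline{AL}=L$ to show the map vanishes on $L$---but your elementwise argument via $e_j\bigl(\xi_\alpha(b)-\phi(b)m_\alpha\bigr)=0$ is the same idea carried out more directly, and arguably cleaner.
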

\begin{proof}
Suppose that $A$ is  approximately biprojective. Then there exists a net of $A$-bimodule morphisms $(\rho_{\alpha})_{\alpha}$ from $A$ into
$A\otimes_{p}A$ such that $\pi_{A}\circ\rho_{\alpha}(a)\rightarrow a,$ for every $a\in A$. Let $L=\ker\phi$.
Define $\eta_{\alpha}:=id_{A}\otimes q\circ\rho_{\alpha}:A\rightarrow A\otimes_{p}\frac{A}{L}$, where $q$ is a quotient map.
It is easy to see that $\eta_{\alpha}$ is a left $A$-module morphism, for every $\alpha$. Since $A$ has a left approximate identity, $\overline{AL}=L$,
so for every $l\in L$, there exist $a\in A$ and $l^{'}\in L$
such that $l=al^{'}$. Also since for every $l\in L$, $q(l)=0$ and $(\rho_{\alpha})$ is a net of $A$-bimodule morphism,
 we have $$\eta_{\alpha}(l)=id_{A}\otimes
q\circ\rho_{\alpha}(l)=id_{A}\otimes
q\circ\rho_{\alpha}(al^{'})=id_{A}\otimes
q\circ(\rho_{\alpha}(a)\cdot l^{'})=0.$$
Thus
$\eta_{\alpha}$ can be dropped on $\frac{A}{L}$, for every $\alpha$.
So we can see that $\eta_{\alpha}:\frac{A}{L}\rightarrow
A\otimes_{p}\frac{A}{L}$ is  a left $A$-module morphism.

We define a character $\overline{\phi}$ on $\frac{A}{L}$  by
$\overline{\phi}(a+L)=\phi(a)$, for every $a\in A.$ Consider
$\gamma_{\alpha}=id_{A}\otimes \overline{\phi}\circ
\eta_{\alpha}:\frac{A}{L}\rightarrow A$. Since
\begin{equation}
\begin{split}
\gamma_{\alpha}(a\cdot x+L)=id_{A}\otimes\overline{\phi}\circ\eta_{\alpha}(ax+L)&=id_{A}\otimes\overline{\phi}\circ\eta_{\alpha}(ax)\\
&=a\cdot ( id_{A}\otimes\overline{\phi}\circ\eta_{\alpha})(x),
\end{split}
\end{equation}
and $\eta_{\alpha}$ is a left $A$-module morphism, $\gamma_{\alpha}$ is a left $A$-module morphism.  Note that $(\gamma_{\alpha})$ is a net of non-zero maps. To see this consider
\begin{equation}
\begin{split}
\phi\circ\gamma_{\alpha}(x+L)=\phi\otimes\overline{\phi}\circ\eta_{\alpha}(x+L)&=\phi\otimes\overline{\phi}\circ\eta_{\alpha}(x)\\
&=\phi\circ\pi_{A}\circ\rho_{\alpha}(x)\rightarrow \phi(x)\neq 0,
\end{split}
\end{equation}
for every $x\in A.$ Pick $x_{0}$ in $A$ such that $\phi(x_{0})=1.$ Define $m_{\alpha}=\gamma_{\alpha}(x_{0}+L)$. Then we have $$\phi(m_{\alpha})=\phi\circ \gamma_{\alpha}(x_{0}+L)=\phi\circ\pi_{A} \circ\rho_{\alpha}(x_{0})\rightarrow \phi(x_{0})=1.$$ Consider
\begin{equation}
\begin{split}
ax_{0}+L=(a-\phi(a)x_{0}+\phi(a)x_{0})x_{0}+L&=ax_{0}-\phi(a)x_{0}^{2}+\phi(a)x_{0}^{2}+L\\
&=\phi(a)x_{0}^{2}+L\\
&=\phi(a)x_{0}+L,
\end{split}
\end{equation}
since $x_{0}^{2}-x_{0}\in L.$ Therefore
$$am_{\alpha}=a\gamma_{\alpha}(x_{0}+L)=\gamma_{\alpha}(ax_{0}+L)=\phi(a)\gamma_{\alpha}(x_{0}+L)=\phi(a)m_{\alpha}.$$
Replacing $(m_{\alpha})$ with
$(\frac{m_{\alpha}}{\phi(m_{\alpha})})$, we can assume that
$am_{\alpha}=\phi(a)m_{\alpha}$ and $\phi(m_{\alpha})=1$. Then $A$
is left $\phi$-contractible, see \cite[Theorem 2.1]{nas}.
\end{proof}
\begin{Remark}
Existence of a left approximate identity  is essential for previous Theorem, which we cannot omit  it. To see this let $A=\set{{\left(\begin{array}{cc} 0&a\\
0&b\\
\end{array}
\right)| a,b\in\mathbb{C}}}$. With matrix operation $A$ becomes a Banach algebra. It is easy to see that $A$ is a biprojective Banach algebra.
 Then $A$ is approximately biprojective. If $A$ has a left approximate identity, then an easy calculation shows that $\dim A=1$ which is impossible.
 On the other hand if  we define
$\phi(\left(\begin{array}{cc} 0&a\\
0&b\\
\end{array}
\right))=b$. It is easy to see that $\phi$ is a character on $A$. One can show that  $A$ is left $\phi$-contractible if and only if $\dim A=1$ which is impossible.
\end{Remark}
At the following result we extend \cite[Corollary 2.10]{rost1}(ii), to the approximate biprojective case.
\begin{cor}
Let $S$ be a semigroup with a left unit. If  $\ell^{1}(S)$ is approximately biprojective with a right approximate identity, then $S$ is finite.
\end{cor}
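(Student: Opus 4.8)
The plan is to reduce everything to Theorem \ref{app give phi}. First I would observe that, since $S$ has a left unit $e_{l}$, the point mass $\delta_{e_{l}}$ is a left identity for $\ell^{1}(S)$: indeed $\delta_{e_{l}}\ast\delta_{s}=\delta_{e_{l}s}=\delta_{s}$ for every $s\in S$, and the $\delta_{s}$ span a dense subspace, so $\delta_{e_{l}}\ast f=f$ for all $f\in\ell^{1}(S)$. In particular $\ell^{1}(S)$ has a (bounded) left approximate identity. Combined with the hypothesis that $\ell^{1}(S)$ also has a right approximate identity, we conclude that $\ell^{1}(S)$ is an approximately biprojective Banach algebra possessing both a left and a right approximate identity. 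Letting $\phi$ denote the augmentation character on $\ell^{1}(S)$, I would then apply Theorem \ref{app give phi} twice --- once using the left approximate identity and once using the right one --- to obtain that $\ell^{1}(S)$ is simultaneously left $\phi$-contractible and right $\phi$-contractible.

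Next I would unwind these two conditions and assemble a two-sided invariant element. Left $\phi$-contractibility produces $m_{1}\in\ell^{1}(S)$ with $a m_{1}=\phi(a)m_{1}$ for all $a\in\ell^{1}(S)$ and $\phi(m_{1})=1$; in particular $\delta_{s}\ast m_{1}=m_{1}$ for every $s\in S$. Symmetrically, right $\phi$-contractibility produces $m_{2}\in\ell^{1}(S)$ with $m_{2}\ast\delta_{s}=m_{2}$ for every $s\in S$ and $\phi(m_{2})=1$. Put $m=m_{1}\ast m_{2}$. Then $\delta_{s}\ast m=\delta_{s}\ast m_{1}\ast m_{2}=m_{1}\ast m_{2}=m$ and $m\ast\delta_{s}=m_{1}\ast m_{2}\ast\delta_{s}=m_{1}\ast m_{2}=m$ for every $s\in S$, while $\phi(m)=\phi(m_{1})\phi(m_{2})=1$. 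Hence $\ell^{1}(S)$ contains an element $m$ with $\delta_{s}m=m\delta_{s}=m$ for all $s\in S$ and $\phi(m)=1$ --- precisely the conclusion drawn in the proof of Proposition \ref{result}(ii) from \cite[Corollary 2.10]{rost1}.

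Finally I would run the pointwise argument of Proposition \ref{result}(ii): invoking the left unit $e_{l}$ once more, for every $s\in S$ one gets $m(s)=m(e_{l}s)=\delta_{s}m(e_{l})=m(e_{l})$, so $m$ is a constant function on $S$. Since $m\in\ell^{1}(S)$ is summable and $m\neq 0$ (because $\phi(m)=1$), this forces $S$ to be finite. Once Theorem \ref{app give phi} is available the argument is largely bookkeeping; the two points that deserve a moment's care are the observation that a semigroup left unit already furnishes a left approximate identity for $\ell^{1}(S)$ (so that the \emph{right} approximate identity in the hypothesis is exactly what is needed to supply the second, right-handed $\phi$-contractibility and thereby form the two-sided invariant element $m$), and the use of the left unit a second time to see that $m$ is genuinely constant. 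Neither presents a real obstacle.
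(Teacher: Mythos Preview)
Your proposal is correct and follows essentially the same route as the paper: apply Theorem \ref{app give phi} on both sides (using the left unit of $S$ to supply a left approximate identity and the assumed right approximate identity for the other direction) to obtain two-sided $\phi$-contractibility with respect to the augmentation character, then build the bi-invariant element $m$ and invoke the constancy argument from Proposition \ref{result}(ii) (which is exactly what the paper means by ``follow the same arguments as in the proof of \cite[Corollary 2.10]{rost1}''). You have simply written out in full what the paper compresses into two sentences.
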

\begin{proof}
Let $\ell^{1}(S)$ be approximately biprojective. By Theorem \ref{app give phi}, $\ell^{1}(S)$ is left and right $\phi$-contractible for every $\phi\in\Delta(\ell^{1}(S))$. Follow the same arguments as in the proof of \cite[Corollary 2.10]{rost1} to finish the proof.
\end{proof}
\begin{Remark}
Let $S$ be a bicyclic semigroup, that is, $S$ is a semigroup, generated by two elements $p$ and $q$ which $pq=e$ for a  unit element $e.$
 Then $\ell^{1}(S)$ is a unital Banach algebra. Using the previous Corollary, one can see that $\ell^{1}(S)$ is not approximately biprojective.

Consider the semigroup $S=\mathbb{N}_{\vee}$,  with semigroup
operation $m\vee n=\max\{m,n\}$, where $m$ and $n$ are in
$S$. It is easy to see that $\ell^{1}(S)$ is a unital Banach algebra with unit $\delta_{1}$. Since $S$ is an infinite semigroup, by previous Corollary $\ell^{1}(S)$ is not approximately biprojective.
\end{Remark}
Suppose that $A$ and $B$ are Banach algebras and $M$ is a Banach
$(A,B)$-module. The matrix
algebra $T=\left(\begin{array}{cc} A&M\\
0&B\\
\end{array}
\right)$ is called a triangular Banach algebra which equipped with
the norm
$\|\left(\begin{array}{cc} a&m\\
0&b\\
\end{array}
\right)\|_{T}=\|a\|_{A}+\|m\|_{M}+\|b\|_{B}$ for $a\in A$, $m\in M$
and $b\in B$.

In  \cite[Corollary 3.3]{sat} the authors  showed that some triangular Banach algebras are  not biprojective at all. Here at the following theorem we are going to extend this result to the approximately biprojective case.
\begin{Theorem}\label{triangular}
Let $A$ be a Banach algebra with a left approximate identity and let $\phi\in\Delta(A)$. Then  $T=\left(\begin{array}{cc} A&A\\
0&A\\
\end{array}
\right)$ is not approximately biprojective.
\end{Theorem}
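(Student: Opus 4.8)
The plan is to combine Theorem~\ref{app give phi} with a direct obstruction coming from a suitably chosen character of $T$. First I would note that $T$ has a left approximate identity: if $(e_{\alpha})_{\alpha}$ is a left approximate identity for $A$, then $E_{\alpha}=\left(\begin{smallmatrix} e_{\alpha} & 0\\ 0 & e_{\alpha}\end{smallmatrix}\right)$ is a left approximate identity for $T$, because the $(A,A)$-bimodule action on the off-diagonal copy of $A$ is just the multiplication of $A$, so $E_{\alpha}\left(\begin{smallmatrix} a & m\\ 0 & b\end{smallmatrix}\right)=\left(\begin{smallmatrix} e_{\alpha}a & e_{\alpha}m\\ 0 & e_{\alpha}b\end{smallmatrix}\right)\to\left(\begin{smallmatrix} a & m\\ 0 & b\end{smallmatrix}\right)$ for every $\left(\begin{smallmatrix} a & m\\ 0 & b\end{smallmatrix}\right)\in T$.

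Now suppose, for contradiction, that $T$ is approximately biprojective. By Theorem~\ref{app give phi}, $T$ is left $\psi$-contractible for every $\psi\in\Delta(T)$. Consider the functional $\psi\colon T\to\mathbb{C}$, $\psi\left(\begin{smallmatrix} a & m\\ 0 & b\end{smallmatrix}\right)=\phi(b)$; a direct computation with matrix multiplication shows that $\psi$ is multiplicative and nonzero, so $\psi\in\Delta(T)$. Pick $u=\left(\begin{smallmatrix} x & y\\ 0 & z\end{smallmatrix}\right)\in T$ with $tu=\psi(t)u$ for all $t\in T$ and $\psi(u)=\phi(z)=1$. Taking $t=\left(\begin{smallmatrix} a & 0\\ 0 & 0\end{smallmatrix}\right)$, which lies in $\ker\psi$, gives $ax=ay=0$ for all $a\in A$; applying this with $a=e_{\alpha}$ and passing to the limit forces $x=y=0$. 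Then taking $t=\left(\begin{smallmatrix} 0 & c\\ 0 & 0\end{smallmatrix}\right)\in\ker\psi$ gives $cz=0$ for all $c\in A$, and again $z=\lim_{\alpha}e_{\alpha}z=0$. Hence $\phi(z)=0$, contradicting $\phi(z)=1$. Therefore $T$ is not left $\psi$-contractible, and consequently not approximately biprojective.

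I expect no deep obstacle here; the work is bookkeeping with the matrix multiplication and the $(A,A)$-bimodule actions on the corner $A$. The one genuine choice to get right is which character to use: the character $\left(\begin{smallmatrix} a & m\\ 0 & b\end{smallmatrix}\right)\mapsto\phi(a)$ does not lead to a contradiction, since left $\phi$-contractibility of $T$ at that character merely forces $A$ to be left $\phi$-contractible, which is not absurd in general. Using the character supported on the \emph{lower} diagonal corner is what places both the upper-left copy of $A$ and the nilpotent corner $\left(\begin{smallmatrix} 0 & A\\ 0 & 0\end{smallmatrix}\right)$ inside $\ker\psi$; it is this double annihilation, together with the left approximate identity turning one-sided annihilation into genuine vanishing, that collapses the witness $u$ to zero.
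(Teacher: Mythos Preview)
Your proof is correct and follows essentially the same strategy as the paper: both use the lower-right character $\psi\left(\begin{smallmatrix} a & m\\ 0 & b\end{smallmatrix}\right)=\phi(b)$, invoke Theorem~\ref{app give phi} (after noting $T$ inherits a left approximate identity), and then annihilate the contractibility witness via the left approximate identity to reach a contradiction. The only difference is that the paper first passes to the closed ideal $I=\left(\begin{smallmatrix} 0 & A\\ 0 & A\end{smallmatrix}\right)$ using \cite[Proposition~3.8]{nas} before extracting the contradiction, whereas you work directly in $T$; your route is slightly more elementary but otherwise the same argument.
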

\begin{proof}
We are going toward a contradiction and suppose that $T$ is approximately biprojective. Define a character $\psi_{\phi}$ on $T$ by
 $\psi_{\phi}(\left(\begin{array}{cc} a&x\\
0&b\\
\end{array}
\right))=\phi(b)$, for every  $a,x$ and $b$ in $A$. Since $A$ has a left approximate identity, by Theorem \ref{app give phi}, $T$ is a left $\psi_{\phi}-$contractible Banach algebra. Set $I=\left(\begin{array}{cc} 0&A\\
0&A\\
\end{array}
\right)$. Clearly $I$ is a closed ideal in $T$ which $\psi_{\phi}|_{I}\neq 0$. Then by  \cite[Proposition 3.8]{nas} $I$ is left $\psi_{\phi}$-contractible too. Then there exists $\left(\begin{array}{cc} 0&i\\
0&j\\
\end{array}
\right)\in I$ such that
\begin{equation}\label{eq4}
\left(\begin{array}{cc} 0&a\\
0&b\\
\end{array}
\right)\left(\begin{array}{cc} 0&i\\
0&j\\
\end{array}
\right)=\psi_{\phi}(\left(\begin{array}{cc} 0&a\\
0&b\\
\end{array}
\right))\left(\begin{array}{cc} 0&i\\
0&j\\
\end{array}
\right)=\phi(b)\left(\begin{array}{cc} 0&i\\
0&j\\
\end{array}
\right)
\end{equation}
and $$\psi_{\phi}\left(\begin{array}{cc} 0&i\\
0&j\\
\end{array}
\right)=\phi(j)=1,
$$
for every $a, b\in A.$ Suppose that $(e_{\alpha})_{\alpha}$ is the left approximate identity of $A$. Let $a\in \{e_{\alpha}\}_{\alpha}$ and $b$ be an arbitary element of $\ker\phi$. Put $a$ and $b$ in (\ref{eq4}) we have $aj=\phi(b)i=0$. This implies that $e_{\alpha}j=0$ for every $\alpha$. Since $e_{\alpha}$ is an approximate identity for $A $, we have $j=0.$ On the other hand $\phi(j)=1$ which is a contradiction.
\end{proof}
Consider the semigroup $\mathbb{N}_{\wedge}$,  with the semigroup
operation $m\wedge n=\min\{m,n\}$, where $m$ and $n$ are in
$\mathbb{N}$. Let $w:\mathbb{N}_{\wedge}\rightarrow \mathbb{R}^{+}$ be a weight, that is a function which $w(st)\leq w(s)w(t)$, for every $s,t \in S$, for the further details see \cite{dales}.  We recall that for every weight $\ell^{1}(\mathbb{N_{\wedge}})$ has an approximate identity, see \cite[Proposition 3.3.1]{dales}. Also $\Delta(\ell^{1}(\mathbb{N_{\wedge}}),w)$ consists
precisely of the all functions
$\phi_{n}:\ell^{1}(\mathbb{N_{\wedge}},w)\rightarrow \mathbb{C}$
defined by
$\phi_{n}(\sum_{i=1}^{\infty}\alpha_{i}\delta_{i})=\sum_{i=n}^{\infty}\alpha_{i}$
for every $n\in\mathbb{N}$.
\begin{cor}
Let $S=\mathbb{N_{\wedge}}$ and $w$ be a weight on $S$. Then $T=\left(\begin{array}{cc} \ell^{1}(S,w)&\ell^{1}(S,w)\\
0&\ell^{1}(S,w)\\
\end{array}
\right)$ is not approximately biprojective.
\end{cor}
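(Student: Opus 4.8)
The plan is to obtain this as an immediate consequence of Theorem \ref{triangular}, applied with $A = \ell^{1}(S,w)$. For that I need only check the two standing hypotheses of that theorem: that $\ell^{1}(S,w)$ admits a left approximate identity, and that $\Delta(\ell^{1}(S,w)) \neq \emptyset$.

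Both points have already been recorded in the paragraph preceding the statement. First, for every weight $w$ on $\mathbb{N}_{\wedge}$, the algebra $\ell^{1}(\mathbb{N}_{\wedge},w)$ has an approximate identity by \cite[Proposition 3.3.1]{dales}; since $\mathbb{N}_{\wedge}$ is a commutative semigroup (the operation $\min$ is commutative), $\ell^{1}(S,w)$ is a commutative Banach algebra, and hence this approximate identity is in particular a left approximate identity. Second, $\Delta(\ell^{1}(\mathbb{N}_{\wedge},w))$ is non-empty — it consists precisely of the characters $\phi_{n}$ described above — so one may fix, say, $\phi = \phi_{1} \in \Delta(\ell^{1}(S,w))$.

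Since $T$ is exactly the triangular Banach algebra $\left(\begin{array}{cc} A&A\\ 0&A\\ \end{array}\right)$ attached to $A = \ell^{1}(S,w)$ (with the $(A,A)$-module $M = A$ given by the multiplication of $A$), Theorem \ref{triangular} applies verbatim and shows that $T$ is not approximately biprojective, which is the assertion.

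There is no genuine obstacle here: the corollary is a direct specialization of Theorem \ref{triangular}, and its only content is the verification that $\ell^{1}(\mathbb{N}_{\wedge},w)$ meets the hypotheses of that theorem. The single point that merits a moment's care is that $\ell^{1}(\mathbb{N}_{\wedge},w)$ really does possess a left approximate identity for an \emph{arbitrary} weight $w$, not merely in the unweighted case; this is exactly the content of the cited \cite[Proposition 3.3.1]{dales}, together with the commutativity of $\mathbb{N}_{\wedge}$.
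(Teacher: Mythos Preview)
Your proposal is correct and follows essentially the same route as the paper: both verify that $\ell^{1}(\mathbb{N}_{\wedge},w)$ has a (left) approximate identity via \cite[Proposition 3.3.1]{dales} and then invoke Theorem~\ref{triangular}. If anything, your write-up is slightly more careful in explicitly recording the existence of a character and the commutativity of $\mathbb{N}_{\wedge}$; the paper's proof also contains a superfluous remark that $T$ has an approximate identity, which is not actually a hypothesis of Theorem~\ref{triangular}.
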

\begin{proof}
It is well-known that for every weight $\ell^{1}(S,w)$ has an approximate identity. Then $T$ has an approximate identity. Now apply previous Theorem, to finish the proof.
\end{proof}
\begin{cor}
Let $S=\mathbb{N_{\wedge}}$. Then $T=\left(\begin{array}{cc} \ell^{1}(S)^{**}&\ell^{1}(S)^{**}\\
0&\ell^{1}(S)^{**}\\
\end{array}
\right)$ is not approximately biprojective.
\end{cor}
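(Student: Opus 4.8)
The plan is to obtain the statement as a direct application of Theorem~\ref{triangular}, taken with $A=\ell^{1}(S)^{**}$; so what has to be checked is that $\ell^{1}(S)^{**}$ has a left approximate identity and that $\Delta(\ell^{1}(S)^{**})\neq\emptyset$.

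First I would record that $\ell^{1}(\mathbb{N}_{\wedge})$ has a bounded approximate identity: since $\delta_{m}*\delta_{n}=\delta_{\min\{m,n\}}$, a short computation gives $\delta_{n}*f\to f$ and $f*\delta_{n}\to f$ for every $f\in\ell^{1}(S)$, so $(\delta_{n})_{n}$ is a two-sided approximate identity bounded by $1$ (this is also \cite[Proposition 3.3.1]{dales} with the trivial weight). From this I would deduce that $\ell^{1}(S)^{**}$ has a left approximate identity: a weak$^{*}$-cluster point $E$ of $(\delta_{n})_{n}$ in $\ell^{1}(S)^{**}$ is a one-sided identity of $\ell^{1}(S)^{**}$, and, taking the Arens product for which $E$ is a \emph{left} identity, the constant net $(E)$ serves as a bounded left approximate identity. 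Next, $\Delta(\ell^{1}(S)^{**})\neq\emptyset$: the semigroup algebra $\ell^{1}(\mathbb{N}_{\wedge})$ has characters (for instance $\phi(\sum_{i}\alpha_{i}\delta_{i})=\sum_{i}\alpha_{i}$), and any $\phi\in\Delta(\ell^{1}(S))$ extends to $\tilde{\phi}\in\Delta(\ell^{1}(S)^{**})$ by $\tilde{\phi}(F)=F(\phi)$, exactly as in the Preliminaries.

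With these two ingredients in place, Theorem~\ref{triangular} applied to the Banach algebra $A=\ell^{1}(S)^{**}$ and the character $\tilde{\phi}\in\Delta(A)$ says at once that $T$ is not approximately biprojective, which is the assertion. The argument is the verbatim analogue of the proof of the preceding corollary, the sole new point being that here the relevant algebra is the bidual $\ell^{1}(S)^{**}$ rather than $\ell^{1}(S)$ itself.

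The one step that genuinely calls for care is precisely this passage to the bidual: one must invoke the standard fact that a weak$^{*}$-cluster point of a bounded approximate identity of $\ell^{1}(S)$ is a one-sided identity of $\ell^{1}(S)^{**}$, and be attentive to which Arens product makes it a \emph{left} identity — for the other Arens product it is only a right identity, and Theorem~\ref{triangular} genuinely requires the left-sided hypothesis because the triangular algebra in question is upper triangular. Everything else (the explicit approximate identity $(\delta_{n})_{n}$ and the characters of $\ell^{1}(\mathbb{N}_{\wedge})$) is entirely routine.
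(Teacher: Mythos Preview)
Your argument is correct, but it takes a different route from the paper's. The paper works with the first Arens product on $\ell^{1}(S)^{**}$, under which the weak$^{*}$-cluster point of the bounded approximate identity $(\delta_{n})$ is a \emph{right} unit, not a left one. Because of this the authors cannot invoke Theorem~\ref{triangular} as a black box; instead they use the right-handed half of Theorem~\ref{app give phi} to deduce that $T$ is right $\psi_{\phi}$-contractible for the character
\[
\psi_{\phi}\!\left(\begin{array}{cc} a & x\\ 0 & b\end{array}\right)=\phi(a)
\]
(note the \emph{upper-left} entry), then pass to the ideal $I=\left(\begin{array}{cc} \ell^{1}(S)^{**} & \ell^{1}(S)^{**}\\ 0 & 0\end{array}\right)$ and rerun the contradiction from the proof of Theorem~\ref{triangular} on the mirror side. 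Your approach avoids this replay by choosing the second Arens product, for which $\ell^{1}(S)^{**}$ carries a genuine left identity, and then applying Theorem~\ref{triangular} verbatim.

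What each approach buys: yours is cleaner, reusing Theorem~\ref{triangular} as stated rather than re-proving its reflected version; the paper's has the advantage of staying with the conventional (first) Arens product throughout, so that no choice needs to be declared. The two conclusions are in fact equivalent: since $\ell^{1}(\mathbb{N}_{\wedge})$ is commutative, the two Arens products on its bidual are opposite algebras of one another, and the swap $\left(\begin{smallmatrix} a & m\\ 0 & b\end{smallmatrix}\right)\mapsto\left(\begin{smallmatrix} b & m\\ 0 & a\end{smallmatrix}\right)$ identifies the corresponding triangular algebras up to passing to the opposite, an operation that preserves approximate biprojectivity. Your closing caveat about the Arens-product sidedness is therefore exactly the point on which your proof and the paper's diverge.
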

\begin{proof}
We go toward a contradiction  and suppose that $T$ is approximately biprojective.
Since $\ell^{1}(S)$ has a bounded approximate identity, see \cite[Proposition 3.3.1]{dales}, $\ell^{1}(S)^{**}$ has a right unit. So $T$ has a right unit. Let $\phi\in \Delta(\ell^{1}(S)^{**})$ and  $\psi_{\phi}(\left(\begin{array}{cc} a&x\\
0&b\\
\end{array}
\right))=\phi(a)$, for every  $a,x$ and $b$ in $\ell^{1}(S)^{**}$.   Apply Theorem \ref{app give phi}, $T$ is right $\psi_{\phi}$-contractible. Set $I=\left(\begin{array}{cc} \ell^{1}(S)^{**}&\ell^{1}(S)^{**}\\
0&0\\
\end{array}
\right)$. Then $I$ is right $\psi_{\phi}|_{I}$-contractible. But follow the similar arguments as in the proof of Theorem \ref{triangular} implies that  $I$ is not right $\psi_{\phi}|_{I}$-contractible which is a contradiction.
\end{proof}
\begin{cor}
Let $S$ be a right zero semigroup. Then $T=\left(\begin{array}{cc} \ell^{1}(S)&\ell^{1}(S)\\
0&\ell^{1}(S)\\
\end{array}
\right)$ is not approximately biprojective.
\end{cor}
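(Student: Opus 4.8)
The plan is to realize this as a direct instance of Theorem \ref{triangular}, so the whole task reduces to checking that $A=\ell^{1}(S)$ satisfies the hypotheses of that theorem, namely that it has a left approximate identity and carries at least one character. First I would record the elementary structure of $\ell^{1}(S)$ when $S$ is right zero: since $st=t$ for all $s,t\in S$, we have $\delta_{s}\ast\delta_{t}=\delta_{t}$, hence by linearity and continuity $\delta_{s}\ast f=f$ for every $f\in\ell^{1}(S)$ and every fixed $s\in S$. Thus each $\delta_{s}$ is actually a left identity for $\ell^{1}(S)$; in particular $\ell^{1}(S)$ has a left approximate identity, which is the first hypothesis of Theorem \ref{triangular}.

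Next I would exhibit a character. Writing $f\ast g=\phi(f)g$ for the augmentation character $\phi(\sum_{s}a_{s}\delta_{s})=\sum_{s}a_{s}$ (this identity is again immediate from $st=t$), one checks $\phi(f\ast g)=\phi(\phi(f)g)=\phi(f)\phi(g)$, so $\phi\in\Delta(\ell^{1}(S))$ and in particular $\Delta(\ell^{1}(S))\neq\emptyset$. With $A=\ell^{1}(S)$ a Banach algebra with left approximate identity and $\phi\in\Delta(A)$ the augmentation character, Theorem \ref{triangular} applies verbatim to
$$T=\left(\begin{array}{cc} \ell^{1}(S)&\ell^{1}(S)\\ 0&\ell^{1}(S)\\ \end{array}\right),$$
and concludes that $T$ is not approximately biprojective.

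There is no genuine obstacle in the corollary itself: all of the analytic content lies in Theorem \ref{triangular} (the passage from approximate biprojectivity to left $\psi_{\phi}$-contractibility of $T$, its inheritance by the ideal $I$, and the contradiction extracted inside $I$ using the approximate identity), and here one only verifies the two mild assumptions on $\ell^{1}(S)$. Note also that this argument is uniform in $|S|$, including the degenerate case $|S|=1$ where $\ell^{1}(S)\cong\mathbb{C}$; and when $|S|\ge 2$ one may, if preferred, invoke the honest left unit $\delta_{s}$ rather than a mere left approximate identity, which is more than sufficient for Theorem \ref{triangular}.
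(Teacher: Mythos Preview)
Your proposal is correct and follows essentially the same route as the paper: both arguments observe that each $\delta_{s}$ is a left identity for $\ell^{1}(S)$ and then invoke Theorem \ref{triangular}. Your version is slightly more thorough in that you explicitly verify $\Delta(\ell^{1}(S))\neq\emptyset$ via the augmentation character, a hypothesis of Theorem \ref{triangular} that the paper's proof leaves implicit.
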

\begin{proof}
Since $S$ is a right zero semigroup, $\ell^{1}(S)$ has a left unit. Then $T$ has a left unit too. By Theorem \ref{triangular}, $T$ is not approximately biprojective.
\end{proof}
\begin{Proposition}
Let $S=\cup_{e\in E(S)}G_{e}$ be a Clifford semigroup such that
$E(S)$ is uniformly locally finite. Then $T=\left(\begin{array}{cc} \ell^{1}(S)^{**}&\ell^{1}(S)^{**}\\
0&\ell^{1}(S)^{**}\\
\end{array}
\right)$ is not approximately biprojective.
\end{Proposition}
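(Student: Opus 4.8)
The plan is to reduce the statement to Theorem~\ref{triangular} by passing to a suitable \emph{unital corner} of $T$. Note that one cannot simply apply Theorem~\ref{triangular} with $A=\ell^{1}(S)^{**}$, because $\ell^{1}(S)^{**}$ need not carry a one-sided approximate identity: by \cite[Theorem 2.16]{rams} we have $\ell^{1}(S)\cong\ell^{1}-\oplus_{e\in E(S)}\ell^{1}(G_{e})$, and an $\ell^{1}$-direct sum of infinitely many unital algebras has no bounded approximate identity, so $\ell^{1}(S)^{**}$ has a (left or right) identity only when $E(S)$ is finite. Isolating a single group algebra removes this obstruction.

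Concretely, I would argue by contradiction: assume $T$ is approximately biprojective and fix some $e_{0}\in E(S)$. Via the isomorphism $\ell^{1}(S)\cong\ell^{1}-\oplus_{e\in E(S)}\ell^{1}(G_{e})$ the unital algebra $\ell^{1}(G_{e_{0}})$ is a closed two-sided direct summand of $\ell^{1}(S)$, and its identity $x_{e_{0}}$ is a central idempotent of $\ell^{1}(S)$. Since left and right multiplication by $x_{e_{0}}$ are weak$^{*}$-continuous on $\ell^{1}(S)^{**}$ (argued exactly as in the preceding theorem on Clifford semigroups), $x_{e_{0}}$ remains a central idempotent of $\ell^{1}(S)^{**}$ and $D:=x_{e_{0}}\ell^{1}(S)^{**}=\ell^{1}(G_{e_{0}})^{**}$ is a unital Banach algebra with identity $x_{e_{0}}$. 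Put $p=\left(\begin{array}{cc}x_{e_{0}}&0\\0&x_{e_{0}}\end{array}\right)$; then $p$ is a central idempotent of $T$ and $pTp=\left(\begin{array}{cc}D&D\\0&D\end{array}\right)=:T_{0}$.

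The next step is to check that $T_{0}$ inherits approximate biprojectivity from $T$. The map $a\mapsto pa$ is an algebra homomorphism of $T$ onto $T_{0}$ restricting to the identity on $T_{0}$, so, taking the given net of $T$-bimodule morphisms $\rho_{\alpha}\colon T\to T\otimes_{p}T$, restricting each $\rho_{\alpha}$ to $T_{0}$ and composing with the induced projection $T\otimes_{p}T\to T_{0}\otimes_{p}T_{0}$, one obtains a net $\widetilde\rho_{\alpha}\colon T_{0}\to T_{0}\otimes_{p}T_{0}$ of $T_{0}$-bimodule morphisms with $\pi_{T_{0}}\circ\widetilde\rho_{\alpha}(a)\to a$ for every $a\in T_{0}$; hence $T_{0}$ is approximately biprojective. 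On the other hand $D$ is unital, so it has a left approximate identity, and $\Delta(D)\neq\emptyset$ (for instance the unique extension to $D=\ell^{1}(G_{e_{0}})^{**}$ of the augmentation character of $\ell^{1}(G_{e_{0}})$). Theorem~\ref{triangular} then forces $T_{0}=\left(\begin{array}{cc}D&D\\0&D\end{array}\right)$ to be \emph{not} approximately biprojective, contradicting the previous line; therefore $T$ is not approximately biprojective.

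The only genuine work is in two routine verifications: that $x_{e_{0}}$ remains central after passing to $\ell^{1}(S)^{**}$ (the weak$^{*}$-continuity point), and that the maps $\widetilde\rho_{\alpha}$ are indeed $T_{0}$-bimodule morphisms approximately splitting $\pi_{T_{0}}$. I expect the main obstacle to be purely conceptual rather than computational, namely recognising that the result is \emph{not} an immediate corollary of Theorem~\ref{triangular}: since $\ell^{1}(S)^{**}$ itself generally lacks a one-sided approximate identity, one must first cut down to the unital corner $D=\ell^{1}(G_{e_{0}})^{**}$, and the hypothesis of uniform local finiteness of $E(S)$ enters precisely at that point, through the direct-sum decomposition of \cite[Theorem 2.16]{rams}.
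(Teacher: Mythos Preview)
Your proof is correct and takes a genuinely different route from the paper's. Both arguments begin identically: use \cite[Theorem 2.16]{rams} to decompose $\ell^{1}(S)$, pick $e_{0}\in E(S)$, and verify via weak$^{*}$-continuity that $x_{e_{0}}$ is a central idempotent of $\ell^{1}(S)^{**}$. From there the paper proceeds \emph{directly inside $T$}: using the central element $p=\left(\begin{smallmatrix}x_{e_{0}}&0\\0&x_{e_{0}}\end{smallmatrix}\right)$ and the argument of Proposition~\ref{result}, it shows $T$ is left and right $\psi_{\tilde\phi}$-contractible for the augmentation character, passes this to the ideal $I=\left(\begin{smallmatrix}0&\ell^{1}(S)^{**}\\0&\ell^{1}(S)^{**}\end{smallmatrix}\right)$ via \cite[Proposition 3.8]{nas}, and then carries out a concrete matrix computation (essentially repeating the mechanism of Theorem~\ref{triangular}) to reach a contradiction. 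Your argument is more modular: you cut down to the unital corner $T_{0}=pTp=\left(\begin{smallmatrix}D&D\\0&D\end{smallmatrix}\right)$ with $D=\ell^{1}(G_{e_{0}})^{**}$, transfer approximate biprojectivity along the retraction $a\mapsto pa$, and then invoke Theorem~\ref{triangular} as a black box. This buys you a cleaner proof that avoids replaying the $\psi_{\tilde\phi}$-contractibility computation; the paper's approach, by contrast, makes the contradiction explicit and does not need to discuss how approximate biprojectivity passes to corners. Your identification $x_{e_{0}}\ell^{1}(S)^{**}=\ell^{1}(G_{e_{0}})^{**}$ is justified by the Banach-space direct-sum splitting $\ell^{1}(S)^{**}\cong\ell^{1}(G_{e_{0}})^{**}\oplus K^{**}$, on which $x_{e_{0}}$ acts as the first projection by weak$^{*}$-density; this is the only point that merits an extra line if you write it up.
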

\begin{proof}
It is well-known that  $\ell^{1}(S)\cong \ell^{1}-\oplus_{e\in E(S)}\ell^{1}(G_{e}).$ Let $x_{e}$ denote for unit element of $\ell^{1}(G_{e})$. It is easy to see that $\delta_{x_{e}}$ commutes with every elements of $\ell^{1}(S)$. Since two maps $b\mapsto \delta_{x_{e}}b$ and $b\mapsto b\delta_{x_{e}}$ are
$w^{*}-w^{*}$-continuous on $\ell^{1}(S)^{**}$, where $b\in \ell^{1}(S)^{**}$,  $\delta_{x_{e}}$ also commutes with every elements of $\ell^{1}(S)^{**}$. Consider the element $t=\left(\begin{array}{cc} \delta_{x_{e}}&0\\
0&\delta_{x_{e}}\\
\end{array}
\right)\in T$, it is east to see that $t$ commutes with every element of $T$. Let $\phi$ be the augmentation character on $\ell^{1}(S)$ and  $\tilde{\phi}$ its extension to $\ell^{1}(S)^{**}$ and $\psi_{\tilde{\phi}}$ be the character on $T$ which defined in the proof of Theorem \ref{triangular} with respect to $\tilde{\phi}.$ Now go toward a contradiction and suppose that $T$ is approximate biprojective. Follow the same arguments as in the proof of Proposition \ref{result}, $T$ is left and right $\psi_{\tilde{\phi}}$-contractible. Let $I=\left(\begin{array}{cc} 0&\ell^{1}(S)^{**}\\
0&\ell^{1}(S)^{**}\\
\end{array}
\right)$. It is easy to see that $I$ is a closed ideal of $T$. Then by \cite[Proposition 3.8]{nas} $I$ is left and right $\psi_{\tilde{\phi}}$-contractible. Hence there exists $t_{1}$ and $t_{2}$ in $I$ such that $at_{1}=\psi_{\tilde{\phi}}(a)t_{1}$, $t_{2}a=\psi_{\tilde{\phi}}(a)t_{2}$ and $\psi_{\tilde{\phi}}(t_{1})=\psi_{\tilde{\phi}}(t_{2})=1,$ for every $a\in I.$ Define $m=t_{1}t_{2}\in I$, then  there exists element $i$ and $j$ in $\ell^{1}(S)^{**}$ such that $m=\left(\begin{array}{cc} 0&i\\
0&j\\
\end{array}
\right)$. It is easy to see that
\begin{equation}\label{eq6}
am=ma,\quad \psi_{\tilde{\phi}}(m)=1,
\end{equation}
for every $a\in I.$ Set $a=\left(\begin{array}{cc} 0&x\\
0&y\\
\end{array}
\right)$, where $x,y\in \ell^{1}(S)^{**}$  and put $a$ in (\ref{eq6}). Then we have
\begin{equation}\label{eq7}
xj=iy,\quad \tilde{\phi}(j)=1,
\end{equation}
for every  $x,y\in \ell^{1}(S)^{**}$. Set $x=\delta_{x_{e}}$ and $y$ be any element of $\ker\tilde{\phi}$. Put these $x$ and $y$ in (\ref{eq7}), and take $\tilde{\phi}$ on equation  $xj=iy$ it implies that $\tilde{\phi}(j)=0$ which is a contradiction.
\end{proof}
\begin{Proposition}
Let $G$ be a locally compact group. Then $\ell^{1}(S)\otimes_{p} M(G)$ is approximately biprojective if and only if $G$ is finite,
 where $S$ is the semigroup which is defined  in the Example \ref{rem}.
\end{Proposition}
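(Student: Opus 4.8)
The plan is to establish both implications, using Lemma~\ref{lem} and Theorem~\ref{app give phi} as the main tools, and to handle a general locally compact group by reducing to the discrete case via a quotient of $M(G)$.

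For the ``if'' direction, suppose $G$ is finite. Then $M(G)=\ell^{1}(G)$ is (isometrically) the group algebra of a finite group, hence a finite $\ell^{1}$-direct sum of full matrix algebras over $\mathbb{C}$, and therefore biprojective by \cite[Proposition~2.7]{rams}. Since $\ell^{1}(S)$ is biprojective (Example~\ref{rem}(iii)) and the projective tensor product of two biprojective Banach algebras is biprojective, $\ell^{1}(S)\otimes_{p}M(G)$ is biprojective, and in particular approximately biprojective.

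For the converse, assume $\ell^{1}(S)\otimes_{p}M(G)$ is approximately biprojective. The algebra $M(G)$ is unital (with unit $\delta_{e}$), and $\ell^{1}(S)$ has a non-zero idempotent, since $S$ is a right zero semigroup so that $s^{2}=s$ and $\delta_{s}$ is idempotent in $\ell^{1}(S)$. Identifying $\ell^{1}(S)\otimes_{p}M(G)\cong M(G)\otimes_{p}\ell^{1}(S)$ and applying Lemma~\ref{lem} with $A=M(G)$ and $B=\ell^{1}(S)$, we obtain that $M(G)$ is approximately biprojective. Being unital, $M(G)$ has a left approximate identity, so by Theorem~\ref{app give phi} it is left $\phi$-contractible for every $\phi\in\Delta(M(G))$. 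Let $M_{c}(G)$ be the space of continuous (non-atomic) measures on $G$; it is a closed two-sided ideal of $M(G)$, and $M(G)/M_{c}(G)$ is isometrically isomorphic, as a Banach algebra, to $\ell^{1}(G)$. Writing $\omega$ for the augmentation character of $\ell^{1}(G)$ and $\tilde{\omega}\in\Delta(M(G))$ for its lift along the quotient map (so $\tilde{\omega}(\mu)$ is the sum of the atoms of $\mu$, whence $\tilde{\omega}|_{M_{c}(G)}=0$), left $\tilde{\omega}$-contractibility of $M(G)$ descends to left $\omega$-contractibility of $\ell^{1}(G)$. Then \cite[Theorem~6.1]{nas} forces $G$ to be finite.

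The routine parts are the idempotent bookkeeping, the identification $M(G)/M_{c}(G)\cong\ell^{1}(G)$ (Lebesgue decomposition together with the elementary fact that a convolution of two measures is continuous whenever one of the factors is, so the off-diagonal terms vanish in the quotient) and the observation that a quotient of a left $\phi$-contractible algebra is left $\bar{\phi}$-contractible. The step that really drives the argument is the choice of character: the augmentation character of $M(G)$ by itself only yields a left-invariant probability measure, hence compactness of $G$, whereas the ``sum of atoms'' character $\tilde{\omega}$ is precisely the one that detects the discrete part and lets \cite[Theorem~6.1]{nas} close the proof. (Alternatively, one can argue directly from left $\tilde{\omega}$-contractibility: it yields $\mu_{0}\in M(G)$ with $\delta_{g}*\mu_{0}=\mu_{0}$ for all $g\in G$ and $\tilde{\omega}(\mu_{0})=1$; left invariance forces $|\mu_{0}|$ to be a multiple of Haar measure, so $G$ is compact, and if $G$ were infinite Haar measure would be non-atomic, giving $\tilde{\omega}(\mu_{0})=0$, a contradiction.)
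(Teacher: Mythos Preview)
Your proof is correct, but it follows a somewhat different path from the paper's.

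For the forward direction, the paper does not pass through Lemma~\ref{lem}. Instead it observes that $\ell^{1}(S)$ has a left identity and $M(G)$ is unital, so the tensor product itself has a left identity; Theorem~\ref{app give phi} is then applied directly to $\ell^{1}(S)\otimes_{p}M(G)$ to conclude it is left $\phi$-contractible for every $\phi$ in its character space. From there the paper invokes \cite[Theorem~3.14]{nas} (character contractibility passes from a tensor product to its factors) to deduce that $M(G)$ is character contractible, and finishes with \cite[Corollary~6.2]{nas}, which characterises character contractibility of $M(G)$ by finiteness of $G$.

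Your route instead first isolates $M(G)$ via Lemma~\ref{lem}, then applies Theorem~\ref{app give phi} to $M(G)$, and finally handles the conclusion by hand: you pick the ``sum of atoms'' character, pass to the quotient $M(G)/M_{c}(G)\cong \ell^{1}(G_{d})$, and invoke \cite[Theorem~6.1]{nas} (or give the direct Haar-measure argument). Both routes are valid. The paper's is shorter because it offloads the endgame to \cite[Theorem~3.14 and Corollary~6.2]{nas}; yours is more self-contained at that step and makes explicit why the augmentation character on $M(G)$ alone would only yield compactness, which is a nice observation. Note, incidentally, that once you have shown $M(G)$ is left $\phi$-contractible for every $\phi$, you could also finish immediately by citing \cite[Corollary~6.2]{nas}, bypassing the quotient argument entirely.

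The ``if'' direction is essentially the same in both: the paper appeals to biprojectivity of $\ell^{1}(S)$ together with \cite[Proposition~2.4]{rams} for the tensor product, which is exactly what you spell out.
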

\begin{proof}
Suppose that $\ell^{1}(S)\otimes_{p} M(G)$  is approximately biprojective. Since $M(G)$ is unital and $\ell^{1}(S)$ has a left identity.
 Then by Theorem \ref{app give phi}  $\ell^{1}(S)\otimes_{p} M(G)$ is left $\phi$-contractible for every $\phi\in\Delta(\ell^{1}(S)\otimes_{p} M(G))$.
 Apply \cite[Theorem 3.14]{nas} to show that $M(G)$ is character contractible, hence by \cite[Corollary 6.2]{nas} $G$ is finite.

Converse holds By  biprojectivity of $\ell^{1}(S)$ and
\cite[Proposition 2.4]{rams}.
\end{proof}

\begin{Remark}
We want to give some Banach algebras which is never approximately biprojective. Consider the semigroup $\mathbb{N}_{\vee}$,  with semigroup
operation $m\vee n=\max\{m,n\}$, where $m$ and $n$ are in
$\mathbb{N}$. Authors in \cite[Example 3.5]{sah2} showed that for every   weight $w:\mathbb{N_{\vee}}\rightarrow \mathbb{R}^{+}$,
 $\ell^{1}(\mathbb{N_{\vee}}, w)$ is not pseudo-contractible, then it is not approximately biprojective,
since $\ell^{1}(\mathbb{N_{\vee}},w)$ has a unit $\delta_{1}$, see \cite[page 43]{dales}. Let $A$ be a Banach algebra with a bounded left approximate identity, with $\Delta(A)\neq \emptyset$.
Then use the similar arguments as in the previous Proposition  $A\otimes_{p}\ell^{1}(\mathbb{N_{\vee}},w)$ and $A\oplus \ell^{1}(\mathbb{N_{\vee}},w)$
are never approximately biprojective.
\end{Remark}
\begin{small}

\end{small}

\end{document}